\newtheorem{thm}{Theorem}[section]
\newtheorem{cor}[thm]{Corollary}
\newtheorem{lem}[thm]{Lemma}
\newtheorem{rem}[thm]{Remark}
\numberwithin{equation}{section}
\begin{document}

\title[Quantitative recurrence and homogeneous self-similar sets]
{Quantitative recurrence properties and homogeneous self-similar sets}

\author{Yuanyang Chang}
%\thanks{Yuanyang Chang is partially supported by}

\author{Min Wu}
%\thanks{Min Wu is partially supported by}

\author{Wen Wu$^{\ast}$}
\address{Department of Mathematics, South China University of Technology, Guangzhou, 510640, P. R. China}
\email{changyy@scut.edu.cn, wumin@scut.edu.cn, wuwen@scut.edu.cn}
\thanks{$^{\ast}$Corresponding author.}
%\thanks{Wen Wu is partially supported by}

%    General info
\subjclass[2010]{Primary 28A80; 28D05; Secondary 11K55}
\date{}

\keywords{Quantitative recurrence, Self-similar set, Hausdorff measure}

\begin{abstract}
Let $K$ be a homogeneous self-similar set satisfying the strong separation condition. This paper is concerned with the quantitative recurrence properties of the natural map $T: K\rightarrow K$ induced by the shift. Let $\mu$ be the natural self-similar measure supported on $K$. For a positive function $\varphi$ defined on $\mathbb{N}$, we show that the $\mu$-measure of the following set
\begin{equation*}
  R(\varphi):=\{x\in K: |T^n x-x|<\varphi(n) \; \text{for infinitely many} \; n\in\mathbb{N}\}
\end{equation*}
is null or full according to convergence or divergence of a certain series. Moreover, a similar dichotomy law holds for the general Hausdorff measure, which completes the metric theory of this set.
\end{abstract}

\maketitle

%%%%%%%%%%%%%%%%%%%%%%%%%%%%%%%%%%%%%%%%%%%%%%%%%%%%%%%%%%%%%%%%%%%
%%%%%%%%%%%%%%%%%%%%%%%%%%%%%%%%%%%%%%%%%%%%%%%%%%%%%%%%%%%%%%%%%%%
\section{Introduction}
Let $(X, d)$ be a separable metric space, $\mathcal{B}$ the Borel $\sigma$-algebra, $T: X\rightarrow X$ a Borel measurable map and $\mu$ a $T$-invariant Borel probability measure on $X$. The quintuple $(X, \mathcal{B}, \mu, d, T)$ is called a metric measure-preserving system (MMPS). The famous Poincar\'{e} Recurrence Theorem implies that $\mu$-almost every $x\in X$ is recurrent in the sense that
\begin{equation}\label{Poincare}
  \liminf_{n\rightarrow\infty} d(T^n x, x)=0.
\end{equation}
That is to say, for $\mu$-almost every $x\in X$, the orbit $\{T^n x\}_{n\geq 0}$ returns to shrinking neighborhoods of the initial point $x$ infinitely often. However, \eqref{Poincare} is qualitative in nature which does not tell anything about the rate at which a generic orbit comes back to the start point or in what manner the neighborhoods of the start point can shrink. This motivates many authors to investigate the so-called quantitative recurrence properties. For instance, the dynamical Borel-Cantelli lemma \cite{CK01, FMP07}, the first return time \cite{BS01}, the shrinking target problems \cite{HV95, LWW14}, and etc. Among them, Boshernitzan \cite{Bos93} first quantified the recurrence rate of generic orbits for general MMPSs.
\begin{thm}\label{Bosh-thm}\emph{(see \cite{Bos93})}
  Let $(X, \mathcal{B}, \mu, d, T)$ be an MMPS. Assume that, for some $\alpha>0$, the $\alpha$-dimensional Hausdorff measure $\mathcal{H}^{\alpha}$ is $\sigma$-finite
on $X$. Then for $\mu$-almost every $x\in X$,
\begin{equation}\label{Bosh-result-1}
  \liminf_{n\rightarrow\infty} n^{\frac{1}{\alpha}}d(T^n x, x)<\infty.
\end{equation}
If, moreover, $\mathcal{H}^{\alpha}(X)=0$, then for $\mu$-almost every $x\in X$,
\begin{equation}\label{Bosh-result-2}
  \liminf_{n\rightarrow\infty} n^{\frac{1}{\alpha}}d(T^n x, x)=0.
\end{equation}
\end{thm}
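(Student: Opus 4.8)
The plan is to derive both statements from Kac's return-time formula applied to an economical covering of $X$ by sets of small diameter. First I would reduce \eqref{Bosh-result-1} to the case $\mathcal H^\alpha(X)<\infty$: writing $X=\bigsqcup_\ell X_\ell$ with $\mathcal H^\alpha(X_\ell)<\infty$, some piece $X_{\ell_0}$ has $\mu(X_{\ell_0})>0$, and the first-return map $T_0$ to $X_{\ell_0}$, together with the normalised restriction of $\mu$, is again an MMPS, now on a space with $\mathcal H^\alpha<\infty$; Kac's formula puts the return-time function in $L^1$, so by Birkhoff the time $r_m(x)$ of the $m$-th return grows at most linearly in $m$. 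Hence an estimate $\liminf_m m^{1/\alpha}d(T_0^m x,x)<\infty$ a.e.\ on $X_{\ell_0}$ transfers to $\liminf_n n^{1/\alpha}d(T^n x,x)<\infty$ a.e.\ on $X_{\ell_0}$, and ranging over the pieces of positive measure covers $\mu$-a.e.\ $x$. For \eqref{Bosh-result-2} one has $\mathcal H^\alpha(X)=0$ outright, so no reduction is needed.

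For the core estimate assume $\mathcal H^\alpha(X)<\infty$ and fix a small scale $\rho>0$ and a parameter $k\in\mathbb N$. Since $\mathcal H^\alpha_\rho(X)\le\mathcal H^\alpha(X)$, I would take a countable cover of $X$ by sets of diameter $<\rho$ with $\sum_i|U_i|^\alpha\le\mathcal H^\alpha(X)+1=:V$, then pass to the Borel partition $U_i':=U_i\setminus\bigcup_{i'<i}U_{i'}$, which keeps the diameter bound and $\sum_i|U_i'|^\alpha\le V$ and, being disjoint, satisfies $\sum_i\mu(U_i')=1$. Call a cell $U=U_i'$ \emph{good} if $\mu(U)\ge|U|^\alpha/(kV)$; then the bad cells carry $\mu$-mass $\sum_{\text{bad}}\mu(U_i')<\tfrac1k$, so the good cells carry mass $>1-\tfrac1k$. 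For a good cell $U$, Kac's formula gives $\int_U R_U\,d\mu\le1$, so by Markov's inequality $H_U:=\{x\in U:\,R_U(x)\le k/\mu(U)\}$ has $\mu(U\setminus H_U)\le\mu(U)/k$; and if $x\in H_U$ and $n:=R_U(x)$, then $T^n x,x\in U$ forces $d(T^n x,x)\le|U|$ while $n\le k/\mu(U)\le k^2V|U|^{-\alpha}$, so $n^{1/\alpha}d(T^n x,x)\le(k^2V)^{1/\alpha}=:M_k$. Consequently $W:=\bigsqcup_{U\text{ good}}H_U$ has $\mu(W)>(1-\tfrac1k)^2>1-\tfrac2k$, and every $x\in W$ admits some $n\ge1$ with $n^{1/\alpha}d(T^n x,x)\le M_k$ and $d(T^n x,x)<\rho$.

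Next I would run this construction along a sequence of scales $\rho_j\downarrow0$, obtaining sets $W_j=W_j^{(k)}$ with $\mu(W_j)>1-\tfrac2k$; the reverse Fatou inequality gives $\mu(\limsup_j W_j)\ge\limsup_j\mu(W_j)\ge1-\tfrac2k$. If $x\in\limsup_j W_j$, choose $j_1<j_2<\cdots$ with $x\in W_{j_m}$; this produces integers $n_m\ge1$ with $n_m^{1/\alpha}d(T^{n_m}x,x)\le M_k$ and $d(T^{n_m}x,x)<\rho_{j_m}\to0$. If $\sup_m n_m=\infty$ one extracts $n_m\to\infty$ and gets $\liminf_n n^{1/\alpha}d(T^n x,x)\le M_k$; otherwise some $n'$ is hit infinitely often, forcing $d(T^{n'}x,x)=0$, i.e.\ $x$ is periodic and $\liminf_n n^{1/\alpha}d(T^n x,x)=0$. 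Either way $\liminf_n n^{1/\alpha}d(T^n x,x)<\infty$ on a set of measure $>1-\tfrac2k$; letting $k\to\infty$ proves \eqref{Bosh-result-1}. For \eqref{Bosh-result-2} the same scheme runs with $\mathcal H^\alpha(X)=0$: then $\mathcal H^\alpha_{\rho_j}(X)=0$, so the cover at scale $\rho_j$ may be chosen with $\sum_i|U_i'|^\alpha<\epsilon_j$ for any preassigned $\epsilon_j\downarrow0$, the good-cell condition becomes $\mu(U)\ge|U|^\alpha/(k\epsilon_j)$, and the bound sharpens to $n^{1/\alpha}d(T^n x,x)\le(k^2\epsilon_j)^{1/\alpha}\to0$, whence $\liminf_n n^{1/\alpha}d(T^n x,x)=0$ a.e.

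The step I expect to be the real engine is the pairing of Kac's formula with the averaging over cells: almost all of the $\mu$-mass must sit in cells $U$ with $\mu(U)\gtrsim|U|^\alpha$ (forced by comparing $\sum|U_i'|^\alpha\le V$ with $\sum\mu(U_i')=1$), and it is precisely for such cells that Kac guarantees a return to $U$ within time $\lesssim|U|^{-\alpha}$, which is what makes $n^{1/\alpha}d(T^n x,x)$ bounded. The points needing care are turning the cover into a genuine measurable partition before summing cell measures, the Borel--Cantelli/limsup bookkeeping across scales, and the (harmless) treatment of eventually periodic points; note that no mixing, continuity, or dimensionality hypotheses enter beyond $T$-invariance of $\mu$ and $\sigma$-finiteness of $\mathcal H^\alpha$.
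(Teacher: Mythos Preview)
The paper does not contain a proof of this statement: Theorem~\ref{Bosh-thm} is quoted as background from Boshernitzan's original article \cite{Bos93} and is used only as motivation; the paper's own contributions (Theorems~\ref{dict-lebesgue} and~\ref{dict-hausdorff}) concern the specific self-similar system $(K,T)$ and are proved by entirely different, system-specific methods. So there is nothing in the paper to compare your argument against.

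That said, your proposal is a faithful reconstruction of Boshernitzan's original proof. The engine you identify---pairing the bound $\sum_i |U_i'|^\alpha \le V$ with $\sum_i \mu(U_i') = 1$ to force most mass into cells with $\mu(U)\gtrsim |U|^\alpha$, and then invoking Kac's inequality $\int_U R_U\,d\mu \le 1$ together with Markov to get a return within time $\lesssim |U|^{-\alpha}$---is exactly the mechanism in \cite{Bos93}. The reduction from $\sigma$-finite to finite $\mathcal H^\alpha$ via the induced map and Kac/Birkhoff linear growth of return times is also the standard route. Your handling of the bounded-$n_m$ case (forcing periodicity) and the limsup bookkeeping across scales are correct. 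One small point of care in the Kac step: you need $\mu(U)>0$ to invoke the return-time inequality, but any good cell with $\mu(U)=0$ contributes nothing to $W$ anyway, so this is harmless. The argument is sound.
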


Supposing $X$ is a measurable subset of a Euclidean space, Barreira and Saussol \cite{BS01} related the recurrence rate of a generic point $x$ to the lower pointwise dimension $\underline{d}_{\mu}(x)$ of $\mu$, defined as
\begin{equation*}
  \underline{d}_{\mu}(x):=\liminf_{r\rightarrow 0}\frac{\log\mu(B(x, r))}{\log r}.
\end{equation*}
\begin{thm}\label{Bar-Sau}\emph{(see \cite{BS01})}
   If $T: X\rightarrow X$ is a Borel measurable map on a measurable subset $X\subset\mathbb{R}^n$ for some $n\in\mathbb{N}$, and $\mu$ is a $T$-invariant Borel probability measure on $X$. Then for $\mu$-almost every $x\in X$, we have
\begin{equation*}
  \liminf_{n\rightarrow\infty} n^{\frac{1}{\alpha}}d(T^n x, x)=0\;\;\text{for any}\; \alpha>\underline{d}_{\mu}(x).
\end{equation*}

\end{thm}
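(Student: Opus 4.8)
The plan is to prove Theorem~\ref{Bar-Sau} by a covering argument resting on nothing more than the $T$-invariance of $\mu$ (no mixing or ergodicity is needed), in the spirit of Boshernitzan's original proof. Since $n^{1/\alpha}$ is decreasing in $\alpha$, for a fixed $x$ the conclusion for one exponent $\alpha_0>\underline d_\mu(x)$ already yields it for every $\alpha\in(\underline d_\mu(x),\alpha_0)$; hence the exceptional set is contained in the countable union, over rationals $0<\beta<\alpha$, rationals $\varepsilon>0$, and integers $N\ge1$, of
\[
  U_{\alpha,\beta,\varepsilon,N}:=\bigl\{x\in X:\ \underline d_\mu(x)<\beta\ \text{and}\ d(T^nx,x)\ge\varepsilon\,n^{-1/\alpha}\ \text{for all}\ n\ge N\bigr\},
\]
and it suffices to prove $\mu(U_{\alpha,\beta,\varepsilon,N})=0$ for all such parameters. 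Since $T^N$ also preserves $\mu$ and leaves $\underline d_\mu$ unchanged, and since every $x\in U_{\alpha,\beta,\varepsilon,N}$ satisfies $d((T^N)^kx,x)\ge(\varepsilon N^{-1/\alpha})\,k^{-1/\alpha}$ for all $k\ge1$, the set $U_{\alpha,\beta,\varepsilon,N}$ is contained in the analogous set for the system $(X,\mu,T^N)$ with threshold $1$; so we may assume $N=1$. It then remains to show: for fixed rationals $0<\beta<\alpha$ and $\varepsilon>0$, the set $U:=\{x\in X:\ \underline d_\mu(x)<\beta,\ d(T^nx,x)\ge\varepsilon n^{-1/\alpha}\ \text{for all }n\ge1\}$ is $\mu$-null. (Periodic points, and in particular any atoms of $\mu$, lie outside $U$ automatically, and so need no separate treatment.)

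Fix $\delta\in(0,\varepsilon/2)$. Each $x\in U$ has, because $\underline d_\mu(x)<\beta$, a radius $r_x\in(0,\delta)$ with $\mu(B(x,r_x))>r_x^{\beta}$. As $X$ is a subset of a Euclidean space, the Besicovitch covering theorem extracts from $\{B(x,r_x):x\in U\}$ a countable subfamily $\{B_i=B(x_i,r_i)\}_i$ still covering $U$, with multiplicity bounded by a constant $c$ depending only on the ambient dimension; thus $\sum_i\mu(B_i)\le c$ and $\mu(U)\le\sum_i\mu(B_i\cap U)$. The heart of the proof is the bound $\mu(B_i\cap U)<(2/\varepsilon)^{\alpha}r_i^{\alpha}$. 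Put $L_i:=\lfloor(\varepsilon/2r_i)^{\alpha}\rfloor$, which is at least $1$ because $r_i<\varepsilon/2$. If $y\in B_i\cap U$ and $1\le j\le L_i$, then $T^jy\notin B_i$: otherwise $d(T^jy,y)<2r_i$, whereas $d(T^jy,y)\ge\varepsilon j^{-1/\alpha}\ge\varepsilon L_i^{-1/\alpha}\ge2r_i$ — a contradiction. Hence $B_i\cap U\subseteq G_i:=\{y\in B_i:\ T^jy\notin B_i\ \text{for}\ 1\le j\le L_i\}$, and for $0\le a<b\le L_i$ the sets $T^{-a}G_i$ and $T^{-b}G_i$ are disjoint: if some $z$ belonged to both, then $T^az\in G_i$ while $T^{b-a}(T^az)=T^bz\in G_i\subset B_i$ with $1\le b-a\le L_i$, contradicting the definition of $G_i$. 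Thus $G_i,T^{-1}G_i,\dots,T^{-L_i}G_i$ are $L_i+1$ pairwise disjoint sets of equal $\mu$-measure, so $\mu(B_i\cap U)\le\mu(G_i)\le\frac{1}{L_i+1}<(2r_i/\varepsilon)^{\alpha}$. Summing, and using $r_i<\delta$ and $r_i^{\beta}<\mu(B_i)$,
\[
  \mu(U)\ \le\ \sum_i\mu(B_i\cap U)\ \le\ \Bigl(\frac{2}{\varepsilon}\Bigr)^{\!\alpha}\sum_i r_i^{\alpha}\ =\ \Bigl(\frac{2}{\varepsilon}\Bigr)^{\!\alpha}\sum_i r_i^{\alpha-\beta}r_i^{\beta}\ \le\ \Bigl(\frac{2}{\varepsilon}\Bigr)^{\!\alpha}\delta^{\alpha-\beta}\sum_i\mu(B_i)\ \le\ c\,\Bigl(\frac{2}{\varepsilon}\Bigr)^{\!\alpha}\delta^{\alpha-\beta},
\]
and letting $\delta\to0$ gives $\mu(U)=0$.

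I expect the crux to be the passage, in the covering step above, from the slow-recurrence hypothesis — which holds only at each individual $y\in B_i\cap U$, never uniformly over the ball $B_i$ — to an honest measure estimate on $B_i$. What makes it go through is that one needs the orbit of $y$ to avoid $B_i$ only up to the \emph{scale-dependent} horizon $L_i$; beyond that, the translates $T^{-j}G_i$ for $0\le j\le L_i$ are forced to be pairwise disjoint, after which $T$-invariance settles the count. The naive alternative — propagating slow recurrence from the centre $x_i$ to all of $B_i$ — is hopeless, since $T$ is merely Borel measurable, not continuous. Two further points need care: one must use the bounded-multiplicity Besicovitch covering rather than a Vitali $5r$-covering, because $\mu$ is not assumed to be doubling and a dilated ball $5B_i$ could carry far more mass than $B_i$; and the passage to the power $T^N$ is exactly what prevents "early" returns (at times below $N$) from spoiling the disjointness of the $T^{-j}G_i$.
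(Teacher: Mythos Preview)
The paper does not contain a proof of Theorem~\ref{Bar-Sau}; it is quoted from Barreira--Saussol \cite{BS01} purely as background for the discussion of quantitative recurrence, so there is no ``paper's own proof'' to compare against. What you have written is, in essence, the Barreira--Saussol argument itself (a pointwise-dimension refinement of Boshernitzan's proof via the Besicovitch covering theorem and a Kac-type disjointness count), and the core of it---the estimate $\mu(B_i\cap U)\le (L_i+1)^{-1}$ obtained from the pairwise disjointness of $G_i, T^{-1}G_i,\dots,T^{-L_i}G_i$, combined with $r_i^{\beta}<\mu(B_i)$ and the bounded-multiplicity Besicovitch cover---is correct and well explained.

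One small slip: your monotonicity sentence is stated the wrong way round. Because $n^{1/\alpha}$ is decreasing in $\alpha$, the conclusion for a given $\alpha_0$ implies the conclusion for all $\alpha>\alpha_0$, not for $\alpha\in(\underline d_\mu(x),\alpha_0)$. What you actually need (and implicitly use) for the countable reduction is the contrapositive: if the conclusion \emph{fails} at some $\alpha^*>\underline d_\mu(x)$, it also fails at every $\alpha\in(\underline d_\mu(x),\alpha^*)$, so one may pick rational $\beta<\alpha$ in that interval and land in some $U_{\alpha,\beta,\varepsilon,N}$. With that correction, and reading $\mu(U)$ as outer measure (since you never check measurability of $U$, and do not need to), the argument goes through.
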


Boshernitzan's result \eqref{Bosh-result-1} implies that, generically,
\begin{equation*}
 d(T^n x, x)<c\cdot n^{-1/\alpha}\;\;\text{for infinitely many} \; n\in\mathbb{N}
\end{equation*}
for a uniform constant $c>0$. Theorem \ref{Bar-Sau} indicates that the recurrence rate could be related to some indicator functions of $x$. Moreover, if the function on the right hand side decays faster, the set of points with such a recurrence rate becomes smaller in the sense that it can not support any $T$-invariant Borel probability measures. So, naturally, one would like to measure the size of the set of recurrent points when the recurrence rate $c\cdot n^{-1/\alpha}$ is replaced by a general non-increasing function. 

Let $(X, \mathcal{B}, \mu, d, T)$ be an MMPS and $\varphi$ be a positive function defined on
$\mathbb{N}\times X$. Consider the following dynamically defined limsup set
\begin{equation*}
   R(T, \varphi)=\{x\in X:\; d(T^n x, x)<\varphi(n, x)\;\text{for infinitely many} \; n\in\mathbb{N}\}.
\end{equation*}
Tan and Wang \cite{TW11} calculated the Hausdorff dimension of $R(T_{\beta}, \varphi)$ for the beta dynamical system $([0, 1], T_{\beta})$ with $\beta>1$. Later, Seuret and Wang \cite{SW15} generalized their results to conformal iterated function systems. However, as far as we know, the Hausdorff measure side of $R(T, \varphi)$ for general MMPSs and general $\varphi$ is rarely known. For the purpose of comparison, when we require $\{T^n x\}_{n\geq 1}$ returns to the neighborhoods of a chosen point $x_0$ rather than the initial point $x$, the problem becomes the so-called shrinking target problem (STP). One can refer to \cite{CK01, FMP07} for the measure aspect of STP, and to \cite{HV95, LWW14, WW15} for the dimension aspect.

In this paper, we consider the natural map on a homogeneous self-similar set satisfying a certain separation condition and aim to provide a complete metric theory of the recurrence properties for this concrete system. As we shall see, this is closely related to a folklore problem raised by Mahler (see \cite{Mah84}, Sect. 2): \emph{How close can irrational elements of Cantor's set be approximated by rational numbers?}

To begin with, let $\mathcal{I}=\{\phi_j(x)=\rho x+a_j\}_{j=1}^L$ be a linear iterated function system on $[0, 1]$ (without loss of generality, we assume $0\leq a_1<\cdots<a_L\leq 1-\rho$). It is well known that there exists a unique non-empty compact subset $K\subset [0, 1]$, called the attractor of $\mathcal{I}$, such that $K=\bigcup_{j=1}^L\phi_j(K)$. We further assume the strong separation condition holds for $\mathcal{I}$, that is, the pieces $\left\{\phi_j(K)\right\}_{j=1}^L$ are pariwise disjoint. As a consequence, $\rho\in (0, 1/L)$.
Denote $\Lambda=\{1,\cdots, L\}$. Let $\Lambda^*=\bigcup_{n\geq 1}\Lambda^n$ and $\Lambda^{\mathbb{N}}$ be the space of finite words and infinite words over $\Lambda$ respectively. For any $n\geq 1$ and $\epsilon\in\Lambda^n$, let $\phi_{\epsilon}=\phi_{\epsilon_1}\circ\cdots\circ\phi_{\epsilon_n}$. If $\epsilon\in\Lambda^{\mathbb{N}}$, we denote by $\epsilon|_1^n$ the word
$(\epsilon_1,\cdots,\epsilon_n)$. For each $\epsilon\in\Lambda^{\mathbb{N}}$, the sequence of compact sets $\phi_{\epsilon|_1^n}([0, 1])$ is nested.
Thus the set $\bigcap_{n\geq 1}\phi_{\epsilon|_1^n}([0, 1])$ is a singleton, which we denote by $\pi(\epsilon)$. Therefore,
\begin{equation*}
   K=\pi(\Lambda^{\mathbb{N}})=\bigcup_{\epsilon\in\Lambda^{\mathbb{N}}}\bigcap_{n\geq 1}\phi_{\epsilon|_1^n}([0, 1])
    =\bigcap_{n\geq 1}\bigcup_{\epsilon\in\Lambda^n}\phi_{\epsilon}([0, 1]).
\end{equation*}
The map $\pi: \Lambda^{\mathbb{N}}\rightarrow K$ is a coding map and we call $\epsilon\in\Lambda^{\mathbb{N}}$ a coding of $x\in K$ if $\pi(\epsilon)=x$. Because of the strong separation condition, there are at most countably many points $x\in K$ with multiple codings. Since a countable set is negligible in the sense of Hausdorff measure and Hausdorff dimension, we may assume that each $x\in K$ has a unique coding.

Now we are ready to define the map $T: K\rightarrow K$ as follows: for each $x\in K$ with the coding $\epsilon$, let $T x=\pi(\sigma(\epsilon))$, where $\sigma$ denotes the shift map on the symbolic space $\Lambda^{\mathbb{N}}$. Under the strong separation condition, $K$ is a set of Lebesgue measure zero and has Hausdorff dimension $\dim_H K=\frac{\log L}{-\log\rho}$. Moreover, there exists a Borel probability measure $\mu$ supported on $K$ such that $\mu=\frac{1}{L}\sum_{j=1}^L {\phi_j}\mu$, where $\phi_j \mu$ denotes the push-forward measure of $\mu$ by $\phi_j$, that is,
$\phi_j \mu(A)=\mu\left(\phi_j^{-1}(A)\right)$ for all Borel subsets $A$. In fact, $\mu$ is equal to $\mathcal{H}^{\gamma}|_K$, the $\gamma$-dimension Hausdorff measure restricted to $K$ (see \cite{Fal90, Mat95}). Then $(K, \mathcal{B}, \mu, |\cdot|, T)$ constitutes an MMPS, where $|\cdot|$ denotes the Euclidean metric and $\mathcal{B}$ the Borel $\sigma$-algebra on $K$.

Let $\varphi: \mathbb{N}\rightarrow\mathbb{R}^+$ be a positive function. Set
\begin{equation}\label{recurrent}
  R(\varphi):=\{x\in K: |T^n x-x|<\varphi(n) \; \text{for infinitely many} \; n\in\mathbb{N}\}.
\end{equation}
We obtain the following dichotomy law for the $\mu$-measure of $ R(\varphi)$.
\begin{thm}\label{dict-lebesgue}
   Let $(K, \mathcal{B}, \mu, |\cdot|, T)$ be the MMPS defined as above and $\varphi:\mathbb{N}\rightarrow\mathbb{R}^+$.
   Let $\gamma:=\dim_H K=\frac{\log L}{-\log\rho}$. Then \par
\smallskip
   $\mu(R(\varphi))=\left\{
  \begin{array}{ll}
   0, &\text{if}\; \sum_{n\geq 1} \varphi^{\gamma}(n)<\infty;\\
   1, &\text{if}\; \sum_{n\geq 1} \varphi^{\gamma}(n)=\infty.
  \end{array}
\right.$
\end{thm}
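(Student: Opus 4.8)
The plan is to write $R(\varphi)=\limsup_{n\to\infty}R_{n}$ with $R_{n}:=\{x\in K:\ |T^{n}x-x|<\varphi(n)\}$ and to run both halves of the Borel--Cantelli lemma, so that the whole problem reduces to estimating $\mu(R_{n})$ and the correlations $\mu(R_{m}\cap R_{n})$. I will rely on two elementary facts. First, \emph{Ahlfors $\gamma$-regularity of $\mu$}: a level-$\ell$ cylinder $\phi_{\epsilon}(K)$ ($\epsilon\in\Lambda^{\ell}$) has $\mu$-mass exactly $L^{-\ell}=\rho^{\gamma\ell}$ and diameter $\rho^{\ell}$, and these cylinders are disjoint subintervals of $[0,1]$, so an interval of radius $r$ meets only $O(1)$ of the level-$\ell$ cylinders with $\rho^{\ell}\le r<\rho^{\ell-1}$; hence $\mu(B(z,r))\le C\,r^{\gamma}$ for all $z\in\mathbb{R}$, $r>0$. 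Second, the \emph{conjugacy carried by a cylinder}: if $\epsilon\in\Lambda^{n}$ and $x=\phi_{\epsilon}(y)$ with $y\in K$ then $T^{n}x=y$, so $|T^{n}x-x|=|y-\phi_{\epsilon}(y)|=(1-\rho^{n})\,|y-c_{\epsilon}|$ with $c_{\epsilon}:=\phi_{\epsilon}(0)/(1-\rho^{n})$, while $\mu(\phi_{\epsilon}(A))=L^{-n}\mu(A)$ for Borel $A\subseteq K$.

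For the \emph{convergence direction} I would split $K=\bigsqcup_{\epsilon\in\Lambda^{n}}\phi_{\epsilon}(K)$; by the conjugacy, $R_{n}\cap\phi_{\epsilon}(K)$ is the $\phi_{\epsilon}$-image of $\{y\in K:\ |y-c_{\epsilon}|<\varphi(n)/(1-\rho^{n})\}$, so the Ahlfors bound gives
\[
\mu(R_{n})=\sum_{\epsilon\in\Lambda^{n}}L^{-n}\,\mu\!\Bigl(B\bigl(c_{\epsilon},\tfrac{\varphi(n)}{1-\rho^{n}}\bigr)\cap K\Bigr)\ \le\ \frac{C}{(1-\rho)^{\gamma}}\,\varphi^{\gamma}(n).
\]
Then $\sum_{n}\varphi^{\gamma}(n)<\infty$ forces $\sum_{n}\mu(R_{n})<\infty$, and the first Borel--Cantelli lemma gives $\mu(R(\varphi))=0$.

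For the \emph{divergence direction} I would first reduce to $\varphi(n)\le1$ (otherwise $R_{n}=K$ infinitely often and $R(\varphi)=K$), and then replace $R_{n}$ by a more rigid subset built from periodic codings: let $k(n)$ be the least integer with $\rho^{k(n)}<\varphi(n)$ (so $k(n)\ge1$ and $\rho^{k(n)}\asymp\varphi(n)$) and put
\[
R_{n}^{*}:=\{x\in K:\ \epsilon_{i}=\epsilon_{n+i}\ \text{for}\ i=1,\dots,k(n)\},
\]
$\epsilon=(\epsilon_{i})_{i\ge1}$ being the coding of $x$. If $x\in R_{n}^{*}$ then $x$ and $T^{n}x$ share a level-$k(n)$ cylinder, so $|T^{n}x-x|\le\rho^{k(n)}<\varphi(n)$; hence $R_{n}^{*}\subseteq R_{n}$ and $\limsup R_{n}^{*}\subseteq R(\varphi)$. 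A direct count gives $\mu(R_{n}^{*})=L^{-k(n)}=(\rho^{k(n)})^{\gamma}\asymp\varphi^{\gamma}(n)$ (the set $R_{n}^{*}$ is a disjoint union of $L^{n}$ level-$(n+k(n))$ cylinders, since $\epsilon_{1}\cdots\epsilon_{n}$ forces $\epsilon_{n+1}\cdots\epsilon_{n+k(n)}$), so $\sum_{n}\mu(R_{n}^{*})=\infty$. The proof then rests on two points: (a) conditioned on $\epsilon_{1}\cdots\epsilon_{n}$, the event $R_{n}^{*}$ simply asks that the block $\epsilon_{n+1}\cdots\epsilon_{n+k(n)}$ be a \emph{prescribed} word, so $R_{n}^{*}$ has conditional measure $\mu(R_{n}^{*})$ given those symbols and is independent of $R_{m}^{*}$ whenever $m+k(m)\le n$; (b) for the remaining pairs $m<n$, the constraints in $R_{m}^{*}$ and $R_{n}^{*}$ say that the coding has period $m$ on $[1,m+k(m)]$ and period $n$ on $[1,n+k(n)]$, and the resulting system of symbol-equalities --- a union of two matchings --- is acyclic, by a Fine--Wilf type argument, except for pairs whose greatest common divisor is unusually large compared with $k(m),k(n)$. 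After discarding the scales with $k(n)$ too large (these contribute only a \emph{convergent} part of $\sum_{n}\mu(R_{n}^{*})$, e.g.\ the $n$ with $k(n)\ge\log_{L}n+2\log_{L}\log n$) and finitely many small scales, one obtains $\mu(R_{m}^{*}\cap R_{n}^{*})\le C\,\mu(R_{m}^{*})\mu(R_{n}^{*})$ for every retained pair, and the divergence (Kochen--Stone / Erd\H{o}s--R\'enyi) Borel--Cantelli lemma yields $\mu\bigl(\limsup R_{n}^{*}\bigr)=1$, hence $\mu(R(\varphi))=1$. (Should only a constant $C>1$ be available, one promotes the positive lower bound to $1$ by a Lebesgue-density argument, using that $\mu(R_{n}^{*}\cap\phi_{w}(K))=\mu(\phi_{w}(K))\,\mu(R_{n}^{*})$ for every cylinder $\phi_{w}(K)$ with $|w|<n$.)

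The hard part is step (b): bounding $\mu(R_{m}^{*}\cap R_{n}^{*})$ when $n$ is close to, or commensurable with, $m$ --- where overlapping periodicities really do correlate the two events --- and simultaneously thinning the index set so that these exceptional pairs are negligible in the second moment while $\sum_{n}\mu(R_{n}^{*})$ still diverges. Everything else, including the entire convergence direction and the passage from $R_{n}$ to the periodic-coding sets $R_{n}^{*}$, is routine once Ahlfors regularity and the cylinder conjugacy are in place.
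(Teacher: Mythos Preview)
Your convergence argument is essentially the paper's: both write $R(\varphi)$ as a $\limsup$ of level-$n$ pieces, use Ahlfors $\gamma$-regularity to bound the mass of each piece by $C\varphi^{\gamma}(n)$, and apply the first Borel--Cantelli lemma.

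For the divergence direction your route is genuinely different. The paper stays geometric: it works with the actual recurrence intervals $J(\epsilon_1,\dots,\epsilon_n)$, localizes to a ball $B$, and bounds the correlations $\mu(A_m(B)\cap A_n(B))$ by a two-case argument comparing the length of $J(\epsilon_1,\dots,\epsilon_m)$ with the cylinder scale $\rho^n$. When $\rho^n\ge |J_m|$, each $J_m$ meets at most two $J_n$'s, which produces a bound decaying like $L^{-(n-m)}$; when $\rho^n<|J_m|$, Ahlfors regularity gives the product bound directly. Summing and invoking the Sprind\v{z}uk/Kochen--Stone lemma yields positive local mass, and a density lemma (Lemma~\ref{full-measure}) promotes this to full measure.

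Your symbolic approach via the periodic-coding sets $R_n^{*}$ is a perfectly good alternative, and in fact cleaner than you suggest. After the harmless thinning to indices with $k(n)<n$ (the discarded indices contribute $\sum L^{-k(n)}\le\sum L^{-n}<\infty$), one can check by a direct rank computation---not Fine--Wilf---that $R_m^{*}$ and $R_n^{*}$ are \emph{exactly} independent for every pair $m\neq n$: conditioning on $R_m^{*}$ leaves $\epsilon_1,\dots,\epsilon_m$ i.i.d.\ uniform and the further symbols free, and the induced constraints from $R_n^{*}$ form a forest on the positions (the constraint graph is a union of two matchings whose overlap graph is a disjoint union of paths). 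Hence $\mu(R_m^{*}\cap R_n^{*})=\mu(R_m^{*})\mu(R_n^{*})$, and Kochen--Stone yields $\mu(\limsup R_n^{*})=1$ with no need for the density step. So the ``hard part'' you flagged evaporates once the thinning is the simple one $k(n)<n$; the Fine--Wilf theorem is not the right tool here and your suggested threshold $k(n)\ge\log_L n+2\log_L\log n$ is unnecessarily delicate. What the paper's geometric approach buys is that it never leaves the sets $R_n$ themselves, requires no combinatorics of periodic words, and localizes transparently to balls---useful since Theorem~\ref{dict-hausdorff} then follows by the mass transference principle applied directly to the balls $B\big(\tfrac{[\epsilon]}{1-\rho^n},\tfrac{\rho^n\varphi(n)}{1-\rho^n}\big)$.
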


\smallskip
Let $f$ be a doubling dimension function and let $\mathcal{H}^f$ denote the corresponding Hausdorff $f$-measure (see Sect. 2 for the definitions). Then a similar criterion holds for $\mathcal{H}^f$.
\begin{thm}\label{dict-hausdorff}
   Let $(K, \mathcal{B}, \mu, |\cdot|, T)$ be the MMPS defined as above and $\varphi:\mathbb{N}\rightarrow\mathbb{R}^+$. Let $f$ be a doubling dimension function with $r^{-\gamma}f(r)$ being increasing as $r\rightarrow 0$. Then \par
\smallskip
   $\mathcal{H}^f (R(\varphi))=\left\{
  \begin{array}{ll}
   0, &\text{if}\; \sum_{n\geq 1} f\big(\rho^n\varphi(n)\big)\cdot \rho^{-\gamma n}<\infty;\\
   \mathcal{H}^f (K), &\text{if}\; \sum_{n\geq 1} f\big(\rho^n\varphi(n)\big)\cdot \rho^{-\gamma n}=\infty.
  \end{array}
\right.$
\end{thm}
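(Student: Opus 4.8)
The plan is to recode $R(\varphi)$ as a $\limsup$ set built from the cylinders $\phi_w(K)$. If $x\in K$ has coding $\epsilon$ and $w=\epsilon|_1^n$, then $x=\phi_w(T^nx)$; writing $\phi_w(y)=\rho^n y+b_w$ and using that $p_w:=\pi(\overline{w})=b_w/(1-\rho^n)\in\phi_w(K)$ is the fixed point of $\phi_w$, one gets the identity $|T^nx-x|=(1-\rho^n)\,|T^nx-p_w|$. Since $T^n=\phi_w^{-1}$ on $\phi_w(K)$, pushing forward by $\phi_w$ shows that the set of $x\in\phi_w(K)$ with $|T^nx-x|<\varphi(n)$ is exactly $\phi_w(K)\cap B(p_w,r_n)$ with $r_n=\rho^n\varphi(n)/(1-\rho^n)\asymp\rho^n\varphi(n)$. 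Hence $R(\varphi)=\limsup_n E_n$, where $E_n=\bigcup_{w\in\Lambda^n}\bigl(\phi_w(K)\cap B(p_w,r_n)\bigr)$. I then split off a trivial case: if $\varphi(n)\ge(1-\rho^n)\,\mathrm{diam}\,K$ for infinitely many $n$, then $B(p_w,r_n)\supseteq\phi_w(K)$ for those $n$, so $R(\varphi)=K$; and since $r^{-\gamma}f(r)$ is bounded below and $f$ is doubling, the divergence condition $\sum_n f(\rho^n\varphi(n))\rho^{-\gamma n}=\infty$ then holds automatically, so both sides of the dichotomy agree. Thus we may assume $\varphi(n)\to 0$, in which case, by the strong separation condition, for all large $n$ each ball $B(p_w,r_n)$ lies well inside its own cylinder, so $E_n=K\cap\bigcup_{w\in\Lambda^n}B(p_w,r_n)$.

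For the convergence part, I cover $E_n$ by the $L^n=\rho^{-\gamma n}$ balls $B(p_w,r_n)$, each of diameter $\asymp\rho^n\varphi(n)$; since $f$ is doubling, $\sum_{w\in\Lambda^n}f(2r_n)\asymp\rho^{-\gamma n}f(\rho^n\varphi(n))$. Because $\rho^n\varphi(n)\to 0$, for large $N$ these covers of $E_n$, $n\ge N$, together form an arbitrarily fine cover of $\bigcup_{n\ge N}E_n\supseteq R(\varphi)$, so $\mathcal{H}^f(R(\varphi))\le C\sum_{n\ge N}\rho^{-\gamma n}f(\rho^n\varphi(n))\to 0$. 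This part is routine.

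For the divergence part I deduce the result from Theorem \ref{dict-lebesgue} by a mass transference argument in the Ahlfors $\gamma$-regular space $(K,|\cdot|,\mathcal{H}^{\gamma})$ (recall $\mu=\mathcal{H}^\gamma|_K$ is $\gamma$-regular under the strong separation condition). Since rescaling $f$ by a positive constant affects neither the hypotheses nor $\mathcal{H}^f$ beyond a constant factor, I may assume $r^{-\gamma}f(r)\ge 1$ for small $r$. Define the enlarged function $\psi(n):=\rho^{-n}\bigl(f(\rho^n\varphi(n))\bigr)^{1/\gamma}$, so that $\psi^{\gamma}(n)=\rho^{-\gamma n}f(\rho^n\varphi(n))$ and $\psi\ge\varphi$ for large $n$. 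The divergence hypothesis is then exactly $\sum_n\psi^{\gamma}(n)=\infty$, so Theorem \ref{dict-lebesgue} gives $\mathcal{H}^{\gamma}(R(\psi))=\mathcal{H}^{\gamma}(K)$; one needs the local strengthening $\mathcal{H}^{\gamma}(B\cap R(\psi))=\mathcal{H}^{\gamma}(B\cap K)$ for every ball $B$, obtained by applying Theorem \ref{dict-lebesgue} inside sub-cylinders (or from a zero–one law). Reindexing the pairs $(n,w)$ as $i$ with centres $c_i=p_w\in K$, the recurrence sets become $R(\psi)=K\cap\limsup_i B(c_i,R_i)$ and $R(\varphi)=K\cap\limsup_i B(c_i,r_i)$, where up to the bounded factors $1-\rho^n$ one has $R_i\asymp\bigl(f(r_i)\bigr)^{1/\gamma}$ — precisely the $f$-dilation of the radius $r_i$ in a $\gamma$-dimensional ambient space. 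The Mass Transference Principle for Ahlfors $\gamma$-regular spaces (equivalently, the local ubiquity framework of Beresnevich–Dickinson–Velani applied to the family $\{B(p_w,\rho^n):w\in\Lambda^n\}$) then upgrades the $\mathcal{H}^{\gamma}$-fullness of $\limsup_i B(c_i,R_i)$ to the $\mathcal{H}^f$-fullness of $\limsup_i B(c_i,r_i)$, giving $\mathcal{H}^f(B\cap R(\varphi))=\mathcal{H}^f(B\cap K)$ for every ball $B$, in particular $\mathcal{H}^f(R(\varphi))=\mathcal{H}^f(K)$.

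I expect the divergence direction to be the main obstacle, concentrated in two linked points: first, establishing the \emph{local} ($\mathcal{H}^{\gamma}$-full on every ball) version of Theorem \ref{dict-lebesgue} for $\psi$, which is what the transference input requires rather than mere global full measure; and second, justifying mass transference in the fractal ambient $K$ instead of in $\mathbb{R}$ — this means either invoking an Ahlfors-regular version of the Mass Transference Principle or carrying out the Cantor-set-in-$\limsup$ and mass-distribution construction directly, which in turn requires verifying the local ubiquity hypothesis for the periodic-point family $\{p_w:w\in\Lambda^n\}$ (valid because the cylinders $\phi_w(K)$ inside a ball $B$ capture a definite proportion of $\mathcal{H}^{\gamma}(B\cap K)$ and the balls $B(p_w,\kappa\rho^n)$ are pairwise well separated). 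The preliminary case analysis around $\varphi(n)\not\to 0$ and the bookkeeping with the doubling constants of $f$ are straightforward but must be written out with care.
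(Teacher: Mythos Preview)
Your approach is correct and essentially identical to the paper's: rewrite $R(\varphi)$ as a $\limsup$ of balls in $K$ centred at the periodic points $p_w=[\epsilon]/(1-\rho^n)$, use the obvious cover for convergence, and for divergence apply Theorem~\ref{dict-lebesgue} to the dilated function $\psi(n)=\rho^{-n}f(\rho^n\varphi(n))^{1/\gamma}$ followed by the Mass Transference Principle. The two obstacles you anticipate are non-issues: the local statement $\mathcal{H}^{\gamma}(B\cap R(\psi))=\mathcal{H}^{\gamma}(B)$ is an immediate consequence of the global full-measure conclusion of Theorem~\ref{dict-lebesgue} (since $\mathcal{H}^{\gamma}(K\setminus R(\psi))=0$ forces equality on every ball), and the Mass Transference Principle as stated in the paper (Theorem~\ref{MTP}, from Beresnevich--Velani) is already formulated for locally compact Ahlfors $\delta$-regular metric spaces, so it applies directly to $(K,|\cdot|,\mathcal{H}^{\gamma})$ with no separate ubiquity verification required.
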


\begin{rem}
  \emph{(i)} In Theorem \ref{dict-lebesgue}, the dichotomy law still holds if the strong separation condition is replaced by the open set condition. However, by our approach, this condition could not be relaxed in Theorem \ref{dict-hausdorff}. \par
  \emph{(ii)} If we apply the last two theorems to $\mathcal{I}=\{\frac{x}{3}, \frac{x}{3}+\frac{2}{3}\}$, then $T$ is exactly the $\times 3$ map on the middle third Cantor set, and the set $R(\varphi)$ can be written as
  \begin{equation*}
      R(\varphi)=\{x\in K: \|(3^n-1)x\|<\varphi(n) \; \text{for infinitely many} \; n\in\mathbb{N}\},
  \end{equation*}
where $\|y\|$ stands for the distance from $y$ to its nearest integer. Therefore, Theorem \ref{dict-lebesgue} and \ref{dict-hausdorff} correspond to a version of Khintchine's theorem and Jarn\'{i}k's theorem on the middle third Cantor set respectively \emph{(}see \cite{BRV16} for more backgrounds\emph{)}, which provide an answer to Mahler's problem on Diophantine approximation on Cantor set \cite{Mah84}. One can refer to \cite{LSV07} for a similar answer, where the points in $K$ are approximated by rational numbers $p/q$ with $q\in\{3^n: n\in\mathbb{N}\}$. Our results indicate that one can also approximate points in $K$ by
rational numbers with periodic 3-adic expansion.
\end{rem}

We give several applications of the last two theorems. Theorem \ref{Bosh-thm} shows that, for $\mu$-almost every $x\in K$, \eqref{Bosh-result-2} holds for any $\alpha>\gamma$. We also characterize the recurrence behaviors when $\alpha\leq\gamma$, which together with \eqref{Bosh-result-2} sum up to the following $0$-$\infty$ law.

\begin{cor}\label{comp-bosh}
    Let $(K, \mathcal{B}, \mu, |\cdot|, T)$ be as above. For $\mu$-almost every $x\in K$, we have \par
    $$\liminf_{n\rightarrow\infty} n^{\frac{1}{\alpha}} |T^n x-x|=\left\{
  \begin{array}{ll}
   0, &\text{if}\; \alpha\geq \gamma;\\
   \infty, &\text{if}\; \alpha<\gamma.
  \end{array}
\right.$$
\end{cor}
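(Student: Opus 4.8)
The plan is to derive Corollary~\ref{comp-bosh} directly from Theorem~\ref{dict-lebesgue} by choosing, for each $\alpha$, a suitable one-parameter family of functions $\varphi$. Fix $\alpha>0$ and for $c>0$ set $\varphi_c(n)=c\,n^{-1/\alpha}$. Then
\begin{equation*}
  \Big\{x\in K:\ \liminf_{n\to\infty} n^{1/\alpha}|T^nx-x|<c\Big\}\subseteq R(\varphi_c)\subseteq\Big\{x\in K:\ \liminf_{n\to\infty} n^{1/\alpha}|T^nx-x|\le c\Big\},
\end{equation*}
so that the value of the $\liminf$ in question is governed, up to $\mu$-null sets, by which $\varphi_c$ put $R(\varphi_c)$ in the convergence regime and which in the divergence regime. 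The relevant series is $\sum_{n\ge1}\varphi_c^\gamma(n)=c^\gamma\sum_{n\ge1}n^{-\gamma/\alpha}$.

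First I would treat the case $\alpha<\gamma$. Then $\gamma/\alpha>1$, so $\sum_n n^{-\gamma/\alpha}<\infty$ for \emph{every} $c>0$, hence $\mu(R(\varphi_c))=0$ for all $c$. Taking a countable union over $c=1,2,3,\dots$ (or $c\to\infty$ along any sequence) gives that, for $\mu$-almost every $x$, $n^{1/\alpha}|T^nx-x|\ge c$ eventually for every $c$, i.e. $\liminf_{n\to\infty}n^{1/\alpha}|T^nx-x|=\infty$.

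Next I would treat $\alpha>\gamma$. This is exactly the regime covered by Boshernitzan's theorem (Theorem~\ref{Bosh-thm}), since $\mathcal{H}^{\alpha}(K)=0$ when $\alpha>\gamma=\dim_HK$; thus \eqref{Bosh-result-2} already yields $\liminf_{n\to\infty}n^{1/\alpha}|T^nx-x|=0$ for $\mu$-a.e.\ $x$. Alternatively, and to keep the argument self-contained, one observes that $\gamma/\alpha<1$ forces $\sum_n n^{-\gamma/\alpha}=\infty$ for every $c>0$, so $\mu(R(\varphi_c))=1$; intersecting over $c=1,\tfrac12,\tfrac13,\dots$ gives a full-measure set on which the $\liminf$ is $\le c$ for every $c$, hence equals $0$.

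The only genuine subtlety is the borderline case $\alpha=\gamma$, where $\sum_n n^{-\gamma/\alpha}=\sum_n n^{-1}=\infty$ regardless of $c$. Here I would again take $\varphi_c(n)=c\,n^{-1/\gamma}$, note the series diverges for every $c>0$, and apply the divergence part of Theorem~\ref{dict-lebesgue} to conclude $\mu(R(\varphi_c))=1$ for all $c$; intersecting over a sequence $c_k\downarrow0$ yields $\liminf_{n\to\infty}n^{1/\gamma}|T^nx-x|=0$ for $\mu$-a.e.\ $x$, so the value is $0$ for $\alpha\ge\gamma$ as claimed. The main point to be careful about is the direction of the inclusions above (a strict versus non-strict inequality in the definition of $R(\varphi)$), but since we pass to a countable family of constants $c$ and only ever conclude that the $\liminf$ is $\le c$ for all $c$ (giving $0$) or $\ge c$ for all $c$ (giving $\infty$), the $\le$ versus $<$ distinction is immaterial and no further estimates are needed.
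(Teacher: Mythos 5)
Your proposal is correct and follows essentially the same route as the paper: deduce everything from Theorem \ref{dict-lebesgue} with power-law test functions (plus Boshernitzan for $\alpha>\gamma$). The only cosmetic difference is in the case $\alpha<\gamma$, where you run the convergence part over the countable family $\varphi_c(n)=c\,n^{-1/\alpha}$ and intersect over $c$, whereas the paper gets $\liminf=\infty$ in one stroke from the single function $\varphi(n)=n^{-2/(\alpha+\gamma)}$; both are valid.
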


\begin{proof} The case $\alpha>\gamma$ follows from \eqref{Bosh-result-2}. When $\alpha=\gamma$, we apply the divergent part of Theorem \ref{dict-lebesgue} to $\varphi(n)=\eta\cdot n^{-1/\alpha}$, where $\eta>0$ is arbitrarily small. As a consequence,
$\liminf_{n\rightarrow\infty}n^{1/\alpha}|T^n x-x|\leq\eta$ for $\mu$-almost every $x\in K$. Since $\eta>0$ is arbitrarily small, we
get the desired.

On the other hand, when $\alpha<\gamma$, we apply the convergent part of Theorem \ref{dict-lebesgue} to $\varphi(n)=n^{-\frac{2}{\alpha+\gamma}}$. As a result, for $\mu$-almost every $x\in K$, $n^{1/\alpha}|T^n x-x|\geq n^{\frac{\gamma-\alpha}{\alpha(\alpha+\gamma)}}$ eventually, which implies $\liminf_{n\rightarrow\infty}n^{1/\alpha}|T^n x-x|=\infty$.
\end{proof}

\begin{rem}
   Recall that $\mu=\mathcal{H}^{\gamma}|_K$ and $\underline{d}_{\mu}(x)=\gamma$ for $\mu$-almost every $x\in K$. Thus the last corollary extends Theorem \ref{Bosh-thm} and \ref{Bar-Sau} for the system
   $(K, \mathcal{B}, \mu, |\cdot|, T)$ by characterizing how fast the generic recurrence rate could be.
\end{rem}

\begin{cor}\label{cor-dimension}
  Let $(K, \mathcal{B}, \mu, |\cdot|, T)$ be as above and $\varphi:\mathbb{N}\rightarrow\mathbb{R}^+$. Then
 \begin{equation}\label{dim-recur}
   \dim_H R(\varphi)=\dfrac{\gamma}{1+b} \;\; \text{with} \;\; b=\liminf_{n\rightarrow\infty} \dfrac{\log_{\rho} \varphi(n)}{n},
 \end{equation}
where $\dim_H$ denotes the Hausdorff dimension of a set.
\end{cor}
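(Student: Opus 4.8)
The plan is to read off the dimension of $R(\varphi)$ directly from the Hausdorff measure dichotomy of Theorem~\ref{dict-hausdorff}, testing $R(\varphi)$ against the one-parameter family of power dimension functions $f_s(r)=r^{s}$ with $0<s\le\gamma$. Each $f_s$ is doubling, and $r^{-\gamma}f_s(r)=r^{\,s-\gamma}$ is non-decreasing as $r\to0$ exactly when $s\le\gamma$, so Theorem~\ref{dict-hausdorff} applies to every such $s$. Writing $c_n:=\log_{\rho}\varphi(n)/n$, so that $b=\liminf_{n}c_n$, a direct computation gives
\begin{equation*}
  f_s\bigl(\rho^{n}\varphi(n)\bigr)\,\rho^{-\gamma n}
  =\varphi(n)^{s}\rho^{\,n(s-\gamma)}
  =\rho^{\,n(s(1+c_n)-\gamma)},
\end{equation*}
so the series governing the dichotomy is $\sum_{n\ge1}\rho^{\,n(s(1+c_n)-\gamma)}$. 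I will use $\dim_H R(\varphi)=\inf\{s>0:\mathcal H^{s}(R(\varphi))=0\}=\sup\{s>0:\mathcal H^{s}(R(\varphi))>0\}$ together with the standard fact that $0<\mathcal H^{\gamma}(K)<\infty$ forces $\mathcal H^{s}(K)=\infty$ for all $s<\gamma$. One may assume $b\ge0$, and indeed $\varphi(n)\le\operatorname{diam}K$ for all large $n$: if $\varphi(n)>\operatorname{diam}K$ for infinitely many $n$ then $R(\varphi)=K$ and $\dim_H R(\varphi)=\gamma$ already; otherwise $\varphi$ is eventually $\le1$, hence $c_n\ge0$ eventually and $b\ge0$. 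The degenerate values $b=0$ (where the claimed dimension is $\gamma$, matching the trivial bound $\dim_H R(\varphi)\le\dim_H K$) and $b=+\infty$ (claimed dimension $0$) will be disposed of in passing, so in the main estimates I take $0<b<\infty$.

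For the upper bound $\dim_H R(\varphi)\le\gamma/(1+b)$, fix $s$ with $\gamma/(1+b)<s\le\gamma$ and pick $\varepsilon>0$ small enough that $\delta:=s(1+b-\varepsilon)-\gamma>0$. Since $c_n\ge b-\varepsilon$ for all large $n$, the exponent satisfies $n(s(1+c_n)-\gamma)\ge n\delta$ eventually, so $\sum_{n}\rho^{\,n(s(1+c_n)-\gamma)}$ is dominated by a convergent geometric series. Theorem~\ref{dict-hausdorff} then gives $\mathcal H^{s}(R(\varphi))=0$, hence $\dim_H R(\varphi)\le s$; letting $s\downarrow\gamma/(1+b)$ yields the upper bound. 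When $b=+\infty$ the same estimate applies to every $s>0$: since $c_n\to\infty$, for each fixed $s>0$ the exponent $n(s(1+c_n)-\gamma)\to+\infty$, the series converges, $\mathcal H^{s}(R(\varphi))=0$, and so $\dim_H R(\varphi)=0$; when $b=0$ the bound $\dim_H R(\varphi)\le\gamma=\gamma/(1+b)$ is automatic.

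For the lower bound $\dim_H R(\varphi)\ge\gamma/(1+b)$, fix $s$ with $0<s<\gamma/(1+b)$; then $s<\gamma$, so $\mathcal H^{s}(K)=\infty$. Choose a subsequence $(n_k)$ with $c_{n_k}\to b$ and $\varepsilon>0$ with $\delta':=\gamma-s(1+b+\varepsilon)>0$. For large $k$ one has $c_{n_k}\le b+\varepsilon$, hence the $n_k$-th term of the series is $\rho^{\,n_k(s(1+c_{n_k})-\gamma)}\ge\rho^{-n_k\delta'}\to\infty$; in particular the series diverges. By the divergence part of Theorem~\ref{dict-hausdorff}, $\mathcal H^{s}(R(\varphi))=\mathcal H^{s}(K)=\infty>0$, so $\dim_H R(\varphi)\ge s$; letting $s\uparrow\gamma/(1+b)$ completes the lower bound (this argument also covers $b=0$, with $s<\gamma$). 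Combining the two bounds proves the corollary.

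The only genuinely delicate point is that $b$ is a $\liminf$ rather than a limit, so $\sum_n\rho^{\,n(s(1+c_n)-\gamma)}$ need not behave like the geometric series $\sum_n\rho^{\,n(s(1+b)-\gamma)}$ near the critical exponent. The argument handles this asymmetrically: for convergence (the upper bound) I use the uniform lower bound $c_n\ge b-\varepsilon$ valid for \emph{all} large $n$, which forces geometric domination; for divergence (the lower bound) it suffices to exhibit a \emph{single} subsequence along which $c_{n_k}\to b$ and the terms blow up. Everything else is bookkeeping — the reduction to $\varphi(n)\le\operatorname{diam}K$ and the two degenerate values of $b$ — and the precise behavior of the series exactly at $s=\gamma/(1+b)$ is never needed.
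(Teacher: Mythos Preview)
Your proof is correct and follows essentially the same approach as the paper: both apply Theorem~\ref{dict-hausdorff} with the power dimension functions $f_s(r)=r^{s}$ and show that the governing series converges for $s>\gamma/(1+b)$ (using $c_n\ge b-\varepsilon$ eventually) and diverges for $s<\gamma/(1+b)$ (using a subsequence with $c_{n_k}\to b$). Your write-up is in fact more careful than the paper's brief sketch, explicitly checking the monotonicity hypothesis $s\le\gamma$ needed to invoke Theorem~\ref{dict-hausdorff} and handling the edge cases $b=0$, $b=+\infty$, and $\varphi$ not eventually bounded by $\operatorname{diam}K$.
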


\begin{proof}
It suffices to show that for any $\eta>0$,
\begin{equation*}
  \mathcal{H}^{\frac{\gamma (1-\eta)}{1+b}}(R(\varphi))=\infty\;\text{and}\; \mathcal{H}^{\frac{\gamma (1+\eta)}{1+b}}(R(\varphi))=0.
\end{equation*}
Indeed, by the definition of $b=\liminf_{n\rightarrow\infty} \frac{\log_{\rho} \varphi(n)}{n}$, it is easy to see that
$\varphi^{1+\eta}(n)\leq \rho^{n(b-0.5\eta)}$ for $n$ large enough and
$\varphi^{1-\eta}(n)\geq \rho^{n(b+\eta)}$ for infinitely many $n$.
Letting $f_1(r)=\frac{\gamma (1-\eta)}{1+b}$ and $f_2(r)=\frac{\gamma (1+\eta)}{1+b}$ and applying Theorem \ref{dict-hausdorff}, we get the desired results.
\end{proof}

Corollary \ref{cor-dimension} shows that there are three possibilities for the Hausdorff dimension of $R(\varphi)$ in accordance with the decay rate of $\varphi$:
$$\dim_H R(\varphi)=\left\{
  \begin{array}{ll}
   \gamma, &\text{if}\; \varphi(n)=\rho^{o(n)};\\
   \frac{\gamma}{1+b}, &\text{if}\; \varphi(n)=\rho^{bn+o(n)}\;\text{for some}\;b\in(0, \infty);\\
   0, &\text{otherwise}.
  \end{array}
\right.$$

Further, Theorem \ref{dict-hausdorff} allows us to distinguish those $R(\varphi)$ with equal Hausdorff dimension. For instance, set
$\varphi(n)=\rho^{b n}$ and $\varphi_{\eta}(n)=\rho^{b n}(\log \rho^n)^{\frac{(1+b)(1+\eta)}{\gamma}}$ for some $b>0$ and $\eta>0$. Although it follows immediately from Corollary \ref{cor-dimension} that $\dim_H R(\varphi)=\dim_H R(\varphi_{\eta})=\frac{\gamma}{1+b}$, we have the following exact logarithmic order for recurrence rates.
\begin{cor}\label{log-order}
  Let $(K, \mathcal{B}, \mu, |\cdot|, T)$ be as above. Let $f(r)=r^{\frac{\gamma}{1+b}}$. For any $\eta>0$,
\begin{equation*}
  \mathcal{H}^f (R(\varphi))=\infty\;\;\text{and}\;\; \mathcal{H}^f (R(\varphi_{\eta}))=0.
\end{equation*}
In particular, the set $R(\varphi)\setminus R(\varphi_{\eta})$ is uncountable.
\end{cor}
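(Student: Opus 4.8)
The plan is to derive both equalities by a single choice of dimension function, $f(r)=r^{\gamma/(1+b)}$, and two applications of Theorem \ref{dict-hausdorff}, one to $\varphi$ and one to $\varphi_\eta$. First I would check that this $f$ is admissible for that theorem: it is plainly a doubling dimension function, and $r^{-\gamma}f(r)=r^{-\gamma b/(1+b)}$ is increasing as $r\to 0$, since $b>0$ forces the exponent to be negative. So both applications below are legitimate.

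For $\varphi(n)=\rho^{bn}$ the series in Theorem \ref{dict-hausdorff} trivializes: $\rho^n\varphi(n)=\rho^{(1+b)n}$, whence $f\bigl(\rho^n\varphi(n)\bigr)\rho^{-\gamma n}=\rho^{\gamma n}\cdot\rho^{-\gamma n}=1$ and $\sum_{n\ge 1}f(\rho^n\varphi(n))\rho^{-\gamma n}=\infty$. The divergence part of Theorem \ref{dict-hausdorff} then gives $\mathcal{H}^f(R(\varphi))=\mathcal{H}^f(K)$, and the one remaining point is to upgrade $\mathcal{H}^f(K)$ to $\infty$; this is exactly where $b>0$ enters, via the classical fact that $\mathcal{H}^s(K)=\infty$ for every $s<\dim_H K$, applied with $s=\gamma/(1+b)<\gamma=\dim_H K$. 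For $\varphi_\eta$ the same substitution, writing $\varphi_\eta(n)=\rho^{bn}\psi(n)$ with $\psi$ the subexponential correction factor, yields $f(\rho^n\varphi_\eta(n))\rho^{-\gamma n}=\psi(n)^{\gamma/(1+b)}$; the exponent $\frac{(1+b)(1+\eta)}{\gamma}$ in $\varphi_\eta$ is chosen precisely so that $\psi(n)^{\gamma/(1+b)}$ is comparable to $n^{-(1+\eta)}$, and since $1+\eta>1$ the resulting series converges. The convergence part of Theorem \ref{dict-hausdorff} then gives $\mathcal{H}^f(R(\varphi_\eta))=0$.

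For the final ``in particular'' clause, I would use that $\mathcal{H}^f$ is an outer measure: from $R(\varphi)=\bigl(R(\varphi)\setminus R(\varphi_\eta)\bigr)\cup\bigl(R(\varphi)\cap R(\varphi_\eta)\bigr)$ together with $\mathcal{H}^f(R(\varphi)\cap R(\varphi_\eta))\le\mathcal{H}^f(R(\varphi_\eta))=0$, subadditivity forces $\mathcal{H}^f\bigl(R(\varphi)\setminus R(\varphi_\eta)\bigr)\ge\mathcal{H}^f(R(\varphi))=\infty$; since $f(r)\to 0$ as $r\to 0$, every at most countable set is $\mathcal{H}^f$-null, so $R(\varphi)\setminus R(\varphi_\eta)$ cannot be countable. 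There is no real obstacle here — the whole argument is bookkeeping inside the series of Theorem \ref{dict-hausdorff} — and the only delicate point is that the divergence case of that theorem outputs $\mathcal{H}^f(K)$ rather than $\infty$ outright, which is why the strict inequality $\gamma/(1+b)<\dim_H K$ (hence $b>0$) must be invoked separately.
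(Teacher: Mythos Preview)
Your proposal is correct and follows exactly the approach the paper takes --- namely, apply Theorem~\ref{dict-hausdorff} to $\varphi$ and to $\varphi_\eta$ separately and read off the two measure values from the corresponding series. Your write-up is in fact more careful than the paper's one-line proof: you verify the admissibility hypothesis on $f$, you make explicit the step $\mathcal{H}^f(K)=\infty$ via $\gamma/(1+b)<\dim_H K$ (which the paper leaves implicit), and you supply the subadditivity argument for the uncountability clause.
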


\begin{proof}
Applying Theorem \ref{dict-hausdorff} to $\varphi(n)=\rho^{b n}$ and
$\varphi_{\epsilon}(n)=\rho^{b n}(\log \rho^n)^{\frac{(1+b)(1+\epsilon)}{\gamma}}$ respectively, the result follows immediately.
\end{proof}

The paper is organized as follows. In section 2, we give some notations and list some known results which are needed in the proof of our main results. In section 3, we prove our main results Theorem \ref{dict-lebesgue} and \ref{dict-hausdorff}.

\section{Preliminaries}
%%%%%%%%%%%%%%%%%%%%%%%%%%%%%%%%%%%%%%%%%%%%%%%%%%%%%%%%%%%%%%%%%%%
In this section, we first give the definitions of Hausdorff measures and Hausdorff dimensions. Then we list several known results which are useful in the proofs of Theorem \ref{dict-lebesgue} and \ref{dict-hausdorff}. Throughout, $(X, d)$ is assumed to be a separable metric space and $B(x,r)$ denotes the open ball centred at $x\in X$ with radius $r>0$.

Suppose $F\subset X$ is a subset of $X$.  For each $\tau>0$, a countable collection $\{B(x_i, r_i)\}_{i\geq 1}$ of open balls in $X$ is said to be a \emph{$\tau$-cover} of $F$ if $r_i\leq\tau$ for each $i\geq 1$ and $F\subset\bigcup_{i\geq 1}B(x_i, r_i)$. A function $f: \mathbb{R}^+\rightarrow\mathbb{R}^+$ is called a \emph{dimension function} if $f$ is continuous, non-decreasing and satisfies $f(r)\rightarrow 0$ as $r\rightarrow 0$. A dimension function $f$ is \emph{doubling} if there exists $\lambda\geq 1$ such that $f(2r)\leq \lambda f(r)$ for all small $r$.
The \emph{Hausdorff $f$-measure} of $F$ with respect to the dimension function $f$ is defined as
\begin{equation*}
   \mathcal{H}^f(F):=\lim_{\rho\rightarrow 0}\inf\Big\{\sum_{i\geq 1}f(2 r_i): \{B(x_i, r_i)\}_{i\geq 1}\;\text{is a}\; \rho\text{-cover of}\; F\Big\}.
\end{equation*}

When $f(r)=r^s$ for some $s\geq 0$, $\mathcal{H}^f$ is the usual $s$-dimensional Hausdorff measure $\mathcal{H}^s$ and the Hausdorff dimension
of $F$ is 
\begin{equation*}
   \dim_H F:=\inf\big\{s\geq 0: \mathcal{H}^s(F)=0\big\}=\sup\big\{s\geq 0: \mathcal{H}^s(F)=\infty\big\}.
\end{equation*}
For more details, see for example \cite{Fal90, Mat95}.

Further, suppose that $(X, d)$ is locally compact and there exist constants $\delta>0, 0<c_1<1<c_2<\infty$ and $r_0>0$ such that
\begin{equation}\label{ahlfors-reg}
   c_1 r^{\delta}\leq\mathcal{H}^{\delta}(B(x, r))\leq c_2 r^{\delta},
\end{equation}
for any $x\in X$ and $0<r<r_0$. An implication of \eqref{ahlfors-reg} is that $0<\mathcal{H}^{\delta}(X)<\infty$. Thus $\dim_H X=\delta$. The mass transference principle, developed by Beresnevich and Velani \cite{BV06}, builds up a bridge from $\mathcal{H}^{\delta}$-measure theoretic statements for
a limsup subset of $X$ to general $\mathcal{H}^f$-measure theoretic statements. Given a dimension function $f$ and an open ball $B=B(x, r)$, we define
$B^f:=B(x, f(r)^{1/\delta})$.
\begin{thm}\label{MTP}\emph{(Mass transference principle \cite{BV06})}
  Let $(X, d)$ be as above and $\{B_i\}_{i\geq 1}$ be a sequence of balls in $X$ with radius $r(B_i)\rightarrow 0$ as $i\rightarrow\infty$. Let $f$
be a dimension function such that $r^{-\delta}f(r)$ is monotonically increasing as $r\rightarrow 0$. For any ball $B\subset X$ with $\mathcal{H}^{\delta}(B)>0$, if
\begin{equation*}
  \mathcal{H}^{\delta}(B\cap\limsup_{i\rightarrow\infty}B_i^f)=\mathcal{H}^{\delta}(B),
\end{equation*}
then
\begin{equation*}
  \mathcal{H}^f(B\cap\limsup_{i\rightarrow\infty}B_i)=\mathcal{H}^f(B).
\end{equation*}
\end{thm}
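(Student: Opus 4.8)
\smallskip
\noindent\textbf{Proof strategy for Theorem \ref{MTP}.}
Since $\mathcal{H}^f$ is monotone and $B\cap\limsup_iB_i\subseteq B$, the inequality $\mathcal{H}^f(B\cap\limsup_iB_i)\le\mathcal{H}^f(B)$ is automatic, so the plan is to prove the reverse one. First I would dispose of the easy case $\liminf_{r\to0}r^{-\delta}f(r)<\infty$: since $r^{-\delta}f(r)$ is monotone, this forces $f(r)\asymp r^{\delta}$ near $0$, hence $\mathcal{H}^f\asymp\mathcal{H}^{\delta}$ on subsets of $B$, and in view of \eqref{ahlfors-reg} the statement then reduces, up to constants, to the hypothesis itself. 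So assume $r^{-\delta}f(r)\to\infty$ as $r\to0$; then \eqref{ahlfors-reg} forces $\mathcal{H}^f(B)=\infty$, and the target becomes $\mathcal{H}^f(B\cap\limsup_iB_i)=\infty$. The tool is the mass distribution principle: it is enough to produce, inside any prescribed sub-ball of $B$, a compact set $F\subseteq\limsup_iB_i$ carrying a finite Borel measure $\nu$ concentrated on $F$ with $\nu(F)>0$ such that $\nu(D)\le C\,f(r(D))$ for every ball $D$ of small radius, for then $\mathcal{H}^f(F)\ge\nu(F)/C>0$; carrying out this construction in a disjoint family of sub-balls of $B$ and summing the resulting lower bounds then makes $\mathcal{H}^f(B\cap\limsup_iB_i)$ infinite.

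\smallskip
To build $F$ and $\nu$ I would follow the Cantor-set construction of Beresnevich and Velani \cite{BV06}. Discarding the finitely many $B_i$ with $B_i^f\not\subseteq B$ (permissible since $r_i\to0$ forces $f(r_i)^{1/\delta}\to0$; note also $B_i\subseteq B_i^f$ once $r^{-\delta}f(r)\ge1$), the hypothesis improves to $\mathcal{H}^{\delta}\bigl(A\cap\limsup_iB_i^f\bigr)=\mathcal{H}^{\delta}(A)$ for \emph{every} ball $A\subseteq B$ with $\mathcal{H}^{\delta}(A)>0$, because the $\mathcal{H}^{\delta}|_B$-null set $B\setminus\limsup_iB_i^f$ stays null inside $A$. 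From this one extracts the recursive selection step: given such a ball $A$, a fraction $\theta<1$ and $\varepsilon>0$, there exist finitely many $B_{i_1},\dots,B_{i_N}$ whose expansions $B_{i_k}^f$ are pairwise disjoint and contained in $A$, with each $r_{i_k}<\varepsilon$ and $\sum_{k}\mathcal{H}^{\delta}(B_{i_k}^f)\ge\theta\,\mathcal{H}^{\delta}(A)$. This is a $5r$-covering (Vitali) argument: discard the finitely many $B_i^f$ whose radius exceeds a prescribed threshold, so that the rest still cover $A$ up to a null set; extract a disjoint subfamily recapturing a fixed proportion of $\mathcal{H}^{\delta}(A)$ by \eqref{ahlfors-reg}; iterate inside the leftover open region until the proportion passes $\theta$; and restrict throughout to $B_i$ of arbitrarily small radius. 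Iterating the selection step down a tree --- with fractions $\theta_n\uparrow 1$ at level $n$ --- yields a nested sequence of finite ball families whose intersection $F$ is nonempty and compact; the tree is organised so that every $x\in F$ lies in the small ball $B_i$, not merely in $B_i^f$, at infinitely many levels, which gives $F\subseteq\limsup_iB_i$. The measure $\nu$ is the natural mass distribution on this tree: one fixes $\nu$ on the root and splits the mass of each cell among its children in proportion to the $\mathcal{H}^{\delta}$-measures of the expanded balls $B_{i_k}^f$ used to produce them.

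\smallskip
The step I expect to be the genuine obstacle --- and which is the technical core of \cite{BV06} --- is verifying the gauge estimate $\nu(D)\le C\,f(r(D))$ uniformly over all small balls $D$. This is delicate precisely because $r^{-\delta}f(r)\to\infty$: done carelessly, the construction would attach to deep cells more mass than $f$ of their radii can absorb. One therefore has to tune the free parameters --- the fractions $\theta_n\uparrow 1$ (kept so that $\prod_n\theta_n^{-1}<\infty$), the minimal separation imposed between the balls selected at a given level, and, above all, how rapidly their radii are driven to $0$ from one level to the next --- so that every cell receives mass comparable to $f$ of its radius, no ball at an intermediate scale can collect mass from too many cells, and all the scale-dependent constants accumulating along the tree collapse into a single $C$. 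Once this is achieved, \eqref{ahlfors-reg} and the monotonicity of $r^{-\delta}f(r)$ reduce the estimate for a general ball $D$ to a short case analysis on the level whose cell-scale straddles $r(D)$. Feeding the gauge estimate into the mass distribution principle gives $\mathcal{H}^f(F)\ge\nu(F)/C>0$, and the disjoint-sub-ball union from the first paragraph upgrades this to $\mathcal{H}^f(B\cap\limsup_iB_i)=\infty=\mathcal{H}^f(B)$; together with the trivial inequality, this proves the theorem.
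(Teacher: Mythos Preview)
Your outline is a faithful sketch of the Beresnevich--Velani argument from \cite{BV06}, and there is no mathematical gap in it as a strategy. However, the paper itself does not supply a proof of Theorem~\ref{MTP} at all: it is quoted as a known result, with the citation \cite{BV06} given in the theorem heading, and is then applied as a black box in the divergent part of Theorem~\ref{dict-hausdorff}. So there is nothing to compare your proposal against --- the paper's ``proof'' is simply the reference. If anything, your write-up goes well beyond what the paper requires; for the purposes of this paper it would suffice to cite \cite{BV06} and move on.
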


In this paper, we will take $X$ to be the homogeneous self-similar $K$ defined in Section $1$. Recall that the natural measure $\mu$ on $K$ is the same as $\mathcal{H}^{\gamma}|_K$. So the formula \eqref{ahlfors-reg} is satisfied for $\mu$, i.e.,
\begin{equation}\label{cantor-alf-reg}
  c_1 r^{\gamma} \leq \mu(B(x, r))\leq c_2 r^{\gamma}.
\end{equation}

Recall that a finite measure $\nu$ on $X$ is said to be doubling if there exists a constant $C\geq 1$ such that for any $x\in X$,
\begin{equation*}
   \nu(B(x, 2r))\leq C \nu(B(x, r)).
\end{equation*}
Here are two measure theoretic results: one for the full measure sets and the other for the measure of a limsup set.
\begin{lem}\label{full-measure}\emph{(see \cite{BDV06}, Sect. 8, Proposition 1)}
   Let $(X, d)$ be a metric space and $\mu$  be a finite doubling measure on $X$ such that any open set is $\mu$ measurable. Let $E$ be a Borel subset of $X$.
Assume that there are constants $r_0, c>0$ such that for any ball $B=B(x, r)$ with $x\in X$ and $r<r_0$, we have
\begin{equation*}
   \mu(E\cap B)\geq c\mu(B).
\end{equation*}
Then $E$ has full measure in $X$, that is, $\mu(X\setminus E)=0$.
\end{lem}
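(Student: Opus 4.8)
The plan is to argue by contradiction: suppose $\mu(X\setminus E)>0$. Since $E$ is Borel and $\mu$ is finite, for every ball $B=B(x,r)$ with $r<r_0$ the hypothesis $\mu(E\cap B)\geq c\,\mu(B)$ rewrites as
\begin{equation*}
  \mu\big((X\setminus E)\cap B(x,r)\big)=\mu(B(x,r))-\mu(E\cap B(x,r))\leq(1-c)\,\mu(B(x,r)),
\end{equation*}
so that at \emph{every} point $x$ the upper density $\limsup_{r\to0}\mu((X\setminus E)\cap B(x,r))/\mu(B(x,r))$ is at most $1-c<1$. On the other hand, I would invoke the Lebesgue density theorem, valid for a finite doubling Borel measure on a metric space (it is a consequence of the Vitali covering theorem, which is available in the doubling setting): for $\mu$-almost every point the density of $X\setminus E$ equals the value of its indicator there. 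Since $\mu(X\setminus E)>0$, there is then a point $x_0\in X\setminus E$ at which that density equals $1$, contradicting the previous bound. Hence $\mu(X\setminus E)=0$.

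If one wants to avoid quoting the density theorem, the same contradiction can be reached by the elementary $5r$-covering lemma. Assuming $\mu(X\setminus E)>0$, use outer regularity of the finite Borel measure $\mu$ to pick an open $U\supseteq X\setminus E$ with $\mu(E\cap U)<\varepsilon$ (the threshold $\varepsilon$ to be fixed at the end). For each $x\in X\setminus E$ choose $r_x<r_0$ with $B(x,r_x)\subseteq U$; since the radii are bounded, the $5r$-covering lemma yields a countable pairwise disjoint subfamily $\{B_i=B(x_i,r_i)\}$ with $X\setminus E\subseteq\bigcup_i 5B_i$. Then, using successively the doubling property ($5\leq 2^3$), the hypothesis (legitimate because $r_i<r_0$), and disjointness together with $\bigcup_i B_i\subseteq U$,
\begin{equation*}
  \mu(X\setminus E)\leq\sum_i \mu(5B_i)\leq C^3\sum_i \mu(B_i)\leq\frac{C^3}{c}\sum_i \mu(E\cap B_i)\leq\frac{C^3}{c}\,\mu(E\cap U)<\frac{C^3}{c}\,\varepsilon.
\end{equation*}
Taking $\varepsilon<\tfrac{c}{C^3}\mu(X\setminus E)$ (a positive finite number) forces $\mu(X\setminus E)<\mu(X\setminus E)$, the desired contradiction.

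The real content — and the only place the hypotheses do any work beyond bookkeeping — is the measure-theoretic infrastructure in this abstract setting: that a finite doubling Borel measure on a metric space is outer regular by open sets and admits a Vitali-type covering theorem (equivalently, satisfies the Lebesgue density theorem). Both are classical, and this is where the \emph{doubling} assumption is genuinely used; the inequality $\mu(E\cap B)\geq c\,\mu(B)$ and the finiteness of $\mu$ then enter only through the short estimates above. In our intended application $X=K$ and $\mu=\mathcal H^{\gamma}|_K$ is Ahlfors regular by \eqref{cantor-alf-reg}, so all of this is automatic.
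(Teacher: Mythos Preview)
The paper does not supply its own proof of this lemma; it is quoted verbatim from \cite{BDV06}, Sect.~8, Proposition~1, and used as a black box. Your argument is correct, and your second route --- outer regularity of a finite Borel measure on a metric space together with the $5r$-covering lemma and the doubling estimate $\mu(5B)\leq C^3\mu(B)$ --- is essentially the proof appearing in the cited reference. Your first route via the Lebesgue density theorem for doubling measures is equally valid and simply hides the same covering machinery inside the density theorem.
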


\begin{lem}\label{mea-limsup}\emph{(see \cite{Spr79}, Lemma 5 or \cite{Yan06}, Theorem 1)}
   Let $(X, \mathcal{B}, \mu)$ be a measure space and $\{A_n\}_{n\geq 1}$ be a sequence of measurable sets such that $\sum_{n\geq 1}\mu(A_n)=\infty$. Then
\begin{equation*}
   \mu(\limsup_{n\rightarrow\infty}A_n)\geq\limsup_{N\rightarrow\infty}\dfrac{\big(\sum_{n=1}^N\mu(A_n)\big)^2}{\sum_{m, n=1}^N\mu(A_m\cap A_n)}
   =\limsup_{N\rightarrow\infty}\dfrac{\sum_{1\leq m<n\leq N}\mu(A_m)\mu(A_n)}{\sum_{1\leq m<n\leq N}\mu(A_m\cap A_n)}.
\end{equation*}
\end{lem}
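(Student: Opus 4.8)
The plan is to prove the classical Cauchy--Schwarz (Chung--Erd\H{o}s/Kochen--Stone) bound for finite unions and then promote it to the $\limsup$ set by a tail-truncation argument. First I would establish the finite version: for every $N\ge 1$,
\[
\mu\Big(\bigcup_{n=1}^{N}A_n\Big)\ \ge\ \frac{\big(\sum_{n=1}^{N}\mu(A_n)\big)^2}{\sum_{m,n=1}^{N}\mu(A_m\cap A_n)}.
\]
To see this, set $f_N=\sum_{n=1}^{N}\mathbf{1}_{A_n}$ and $U_N=\bigcup_{n=1}^{N}A_n$, so that $f_N$ vanishes off $U_N$. Applying the Cauchy--Schwarz inequality to $f_N=f_N\cdot\mathbf{1}_{U_N}$ gives
\[
\Big(\int f_N\,d\mu\Big)^2=\Big(\int_{U_N} f_N\,d\mu\Big)^2\le \mu(U_N)\int f_N^2\,d\mu,
\]
and since $\int f_N\,d\mu=\sum_{n=1}^{N}\mu(A_n)$ and $\int f_N^2\,d\mu=\sum_{m,n=1}^{N}\mu(A_m\cap A_n)$, rearranging yields the displayed inequality (the denominator is nonzero, as otherwise $\sum_n\mu(A_n)=0\ne\infty$).

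Next I would pass to the $\limsup$ set, the point being that $U_N$ grows to $\bigcup_{n\ge1}A_n$, which is generally larger than $\limsup_n A_n$. To correct for this I apply the finite inequality to the shifted family $\{A_n\}_{n\ge M}$ and let $N\to\infty$, obtaining for each fixed $M$ that $\mu\big(\bigcup_{n\ge M}A_n\big)$ dominates $\limsup_{N}\big(\sum_{n=M}^{N}\mu(A_n)\big)^2\big/\sum_{m,n=M}^{N}\mu(A_m\cap A_n)$. The crucial observation is that this $\limsup$ is independent of $M$ and equals
\[
L:=\limsup_{N\to\infty}\frac{\big(\sum_{n=1}^{N}\mu(A_n)\big)^2}{\sum_{m,n=1}^{N}\mu(A_m\cap A_n)}.
\]
Indeed, discarding the indices below $M$ lowers the numerator sum by the fixed amount $\sum_{n<M}\mu(A_n)$ and the denominator by at most $2(M-1)\sum_{n=1}^{N}\mu(A_n)$ (using $\mu(A_m\cap A_n)\le\mu(A_n)$); both corrections are of lower order than the full sums because $\sum_n\mu(A_n)=\infty$ forces $\sum_{n\le N}\mu(A_n)\to\infty$ and, via the finite inequality, $\sum_{m,n\le N}\mu(A_m\cap A_n)\to\infty$ of order at least $\big(\sum_{n\le N}\mu(A_n)\big)^2$. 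Hence the truncated ratios are asymptotically equal to the untruncated ones, so their $\limsup$ is $L$ for every $M$.

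Finally I would let $M\to\infty$. The sets $\bigcup_{n\ge M}A_n$ decrease to $\limsup_n A_n=\bigcap_M\bigcup_{n\ge M}A_n$, so by continuity of measure from above,
\[
\mu\big(\limsup_{n\to\infty}A_n\big)=\lim_{M\to\infty}\mu\Big(\bigcup_{n\ge M}A_n\Big)\ge L,
\]
which is the asserted bound. This last continuity step, together with the order estimates of the previous paragraph, is where finiteness of the measure on the relevant unions is used; in our application $\mu$ is a probability measure, so it is automatic, and the main obstacle is precisely this promotion of the union bound to a $\limsup$ bound rather than the Cauchy--Schwarz step itself.

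The second equality in the statement is then elementary: separating the diagonal, the numerator is $\sum_n\mu(A_n)^2+2\sum_{1\le m<n\le N}\mu(A_m)\mu(A_n)$ and the denominator is $\sum_n\mu(A_n)+2\sum_{1\le m<n\le N}\mu(A_m\cap A_n)$. Since $\sum_n\mu(A_n)=\infty$, the diagonal contributions $\sum_n\mu(A_n)^2\le\sum_n\mu(A_n)$ and $\sum_n\mu(A_n)$ are of strictly lower order than the off-diagonal sums whenever the common value $L$ is finite, so dividing numerator and denominator by $2$ and discarding the negligible diagonal terms shows that both ratios have the same $\limsup$.
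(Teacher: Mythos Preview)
The paper does not give its own proof of this lemma; it is stated as a known result with references to Sprind\v{z}uk and Yan, so there is no in-paper argument to compare against. Your proof is the standard Chung--Erd\H{o}s/Kochen--Stone argument and is correct in the finite-measure setting, which is exactly the setting of the paper (where $\mu$ is a probability measure on $K$). You are right to flag that both the continuity-from-above step and the ``lower order'' estimates hinge on $\mu\big(\bigcup_n A_n\big)<\infty$; without that, neither the tail-independence of the $\limsup$ ratio nor the passage $\mu\big(\bigcup_{n\ge M}A_n\big)\downarrow\mu(\limsup A_n)$ is justified, and the lemma as literally stated (for an arbitrary measure space) would need an additional hypothesis. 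For the purposes of this paper that caveat is harmless.

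One small sharpening for the final equality: your phrase ``whenever the common value $L$ is finite'' undersells the argument. Since $L\le\mu(X)<\infty$ automatically here, and since you have shown directly that the off-diagonal numerator and denominator are asymptotic to $S_N^2/2$ and $D_N/2$ respectively (using only $\mu(A_n)\le 1$ and $S_N\to\infty$), the two ratios are asymptotically equal term-by-term, hence share the same $\limsup$ regardless of its value; no separate case $L=0$ or $L=\infty$ is needed.
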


%%%%%%%%%%%%%%%%%%%%%%%%%%%%%%%%%%%%%%%%%%%%%%%%%%%%%%%%%%%%%%%%%%%
\section{Proofs of the main results}
For each finite word $\epsilon=(\epsilon_1,\cdots, \epsilon_n)\in \Lambda^n$, let
$I(\epsilon_1,\cdots, \epsilon_n):=\phi_{\epsilon}([0, 1])$. We call $I(\epsilon_1,\cdots, \epsilon_n)$ a cylinder of order $n$ (with respect to the self-similar set $K$), which is of length $\rho^n$ and $\mu$-measure $L^{-n}$. Then the limsup
set $R(\varphi)$ defined in \eqref{recurrent} can be written as
\begin{align}\label{limsup-deco}
 R(\varphi)=& \mathop{\bigcap}\limits_{k=1}^{\infty}\mathop{\bigcup}\limits_{n=k}^{\infty}\big\{x\in K: \; |T^n x-x|<\varphi(n)\big\}\notag\\
           =& \mathop{\bigcap}\limits_{k=1}^{\infty}\mathop{\bigcup}\limits_{n=k}^{\infty}\mathop{\bigsqcup}\limits_{\epsilon\in\Lambda^n}
              \big\{x\in I(\epsilon_1,\cdots, \epsilon_n):\; |T^n x-x|<\varphi(n)\big\}\cap K\notag\\
          =& \mathop{\bigcap}\limits_{k=1}^{\infty}\mathop{\bigcup}\limits_{n=k}^{\infty}\mathop{\bigsqcup}\limits_{\epsilon\in\Lambda^n}
              J(\epsilon_1,\cdots, \epsilon_n)\cap K
\end{align}
where \(J(\epsilon_1,\cdots, \epsilon_n):=\big\{x\in I(\epsilon_1,\cdots, \epsilon_n):\; |T^n x-x|<\varphi(n)\big\}.\)

In the following part, the notations $\lesssim$ or $\gtrsim$ will be used to indicate an inequality with an unspecified positive constant.
%%%%%%%%%%%%%%%%%%%%%%%%%%%%%%%%%%%%%%%%%%%%%%%%%%%%%%%%%%%%%%%%%%%
\subsection{Proof of Theorem \ref{dict-lebesgue}}
%%%%%%%%%%%%%%%%%%%%%%%%%%%%%%%%%%%%%%%%%%%%%%%%%%%%%%%%%%%%%%%%%%%
Without loss of generality, we may assume $\varphi(n)\leq \frac{1-\rho}{4}$ for all $n\geq 1$. Indeed, suppose this is not the case and define $\psi(n)=\min\{\varphi(n), \frac{1-\rho}{4}\}$ for each $n$. If the series $\sum_{n\geq 1} \varphi^{\gamma}(n)$ converges, then so does $\sum_{n\geq 1} \psi^{\gamma}(n)$. Also, it follows that $\varphi(n)\leq \frac{1-\rho}{4}$ eventually and thus $R(\varphi)=R(\psi)$. On the other hand, if the series $\sum_{n\geq 1} \varphi^{\gamma}(n)$ diverges, then it can be easily verified that $\sum_{n\geq 1} \psi^{\gamma}(n)$ diverges too. Moreover, $R(\psi)\subset R(\varphi)$, so it suffices to show the divergent part for $\psi$. \par

Notice that for each finite word $\epsilon\in\Lambda^n$, any point $x$ in $I(\epsilon_1,\cdots, \epsilon_n)$ can be written as
\begin{equation*}
   x=\sum_{i=1}^n a_{\epsilon_i}\cdot\rho^{i-1} + \rho^n \cdot T^n x=:[\epsilon] + \rho^n \cdot T^n x,
\end{equation*}
thus $|T^n x-x|<\varphi(n)$ implies
$x\in \big(\frac{[\epsilon]}{1-\rho^n}-\frac{\rho^n \varphi(n)}{1-\rho^n}, \frac{[\epsilon]}{1-\rho^n}+\frac{\rho^n \varphi(n)}{1-\rho^n}\big)$.
One can easily check that $J(\epsilon_1,\cdots, \epsilon_n)\neq\emptyset$. Moreover, $J(\epsilon_1,\cdots, \epsilon_n)$ is an interval satisfying
\begin{equation*}
   J(\epsilon_1,\cdots, \epsilon_n)=I(\epsilon_1,\cdots, \epsilon_n)\bigcap\Big(\frac{[\epsilon]}{1-\rho^n}-\frac{\rho^n \varphi(n)}{1-\rho^n}, \frac{[\epsilon]}{1-\rho^n}+\frac{\rho^n \varphi(n)}{1-\rho^n}\Big).
\end{equation*}
By the assumption $\varphi(n)\leq \frac{1-\rho}{4}$, one can show that the length of $J(\epsilon_1,\cdots, \epsilon_n)$ satisfies
\begin{equation}\label{length}
  \rho^n \varphi(n)\leq |J(\epsilon_1,\cdots, \epsilon_n)|\leq \rho^{n-1} \varphi(n).
\end{equation}
Combining \eqref{cantor-alf-reg} and the last formula, we have
\begin{equation}\label{length-J}
 \mu(J(\epsilon_1,\cdots, \epsilon_n))\lesssim (\rho^n\varphi(n))^{\gamma}.
\end{equation}

\medskip
\textbf{The convergent part.}
By \eqref{limsup-deco} and \eqref{length-J},
\begin{equation*}
 \begin{split}
   \mu(R(\varphi(n)))\leq &\liminf_{k\rightarrow\infty}\sum_{n=k}^{\infty}\sum_{\epsilon\in\Lambda^n}\mu(J(\epsilon_1,\cdots,\epsilon_n))\\
                     \lesssim & \liminf_{k\rightarrow\infty}\sum_{n=k}^{\infty} (\rho^n\varphi(n))^{\gamma}\cdot L^n \\
                     = & \liminf_{k\rightarrow\infty}\sum_{n=k}^{\infty} \varphi^{\gamma}(n)=0.
 \end{split}
\end{equation*}

\textbf{The divergent part.} By Lemma \ref{full-measure}, it suffices to show that for any ball $B=B(x, r)$ with $x\in K$ and $r$ small enough, we have
\begin{equation}\label{loc-positive-mea}
   \mu(R(\varphi)\cap B)\geq c \mu(B)
\end{equation}
for some positive constant $c$ independent of $B$. Here we take $B=B(x, r)$ to be an arbitrary ball with $x\in K$ and $\mu(2B):=\mu(B(x, 2r))$ satisfying \eqref{cantor-alf-reg}. Let $n_0:=n_0(B)$ be a sufficiently large integer so that $\rho^{n_0}<r(B)$. For $n\geq 1$, let
\begin{equation*}
   A_n(B)=\{x\in K:\; |T^n x-x|<\varphi(n)\}\cap B=\bigsqcup_{{\epsilon\in\Lambda^n}}J(\epsilon_1,\cdots,\epsilon_n)\cap K\cap B.
\end{equation*}
Then
\begin{equation*}
   R(\varphi)\cap B=\limsup_{n\rightarrow\infty} A_n(B).
\end{equation*}

For $n\geq n_0$, we have
\begin{equation*}
    \#\{\epsilon\in\Lambda^n:\; J(\epsilon_1,\cdots,\epsilon_n)\subset B\}\geq
    \#\{\epsilon\in\Lambda^n:\; I(\epsilon_1,\cdots,\epsilon_n)\subset B\}\geq \frac{\mu(B)}{L^{-n}}.
\end{equation*}
It follows that
\begin{equation}\label{An-mea-lb}
   \mu(A_n(B))\gtrsim \frac{\mu(B)}{L^{-n}}\cdot (\rho^n\varphi(n))^{\gamma}=\mu(B)\varphi^{\gamma}(n),
\end{equation}
which together with the assumption $\sum_{n\geq 1}\varphi^{\gamma}(n)=\infty$ implies
\begin{equation}\label{local-inf-mea}
  \sum_{n\geq 1}\mu(A_n(B))=\infty.
\end{equation}

The key ingredient to establish \eqref{loc-positive-mea} and thus the divergent part of Theorem \ref{dict-lebesgue} is the following  `local quasi-independence on average' result.
\begin{lem}\emph{(Local quasi-independence on average)}\label{local-quasi-indep}
  There exists a constant $C>1$ such that for any ball $B$ as above and for $N$ sufficiently large,
\begin{equation}\label{form-local}
  \sum_{m, n=1}^N\mu(A_m(B)\cap A_n(B))\leq \frac{C}{\mu(B)}\Big(\sum_{n=1}^N\mu(A_n(B))\Big)^2.
\end{equation}
\end{lem}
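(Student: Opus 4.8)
The plan is to estimate $\mu(A_m(B)\cap A_n(B))$ for $m<n$ by decomposing according to the cylinders of order $m$ and then of order $n$ that host the relevant intervals. Recall that $A_n(B)$ is the disjoint union over $\epsilon\in\Lambda^n$ of the sets $J(\epsilon_1,\dots,\epsilon_n)\cap K\cap B$, and by \eqref{length} each $J(\epsilon_1,\dots,\epsilon_n)$ is an interval of length comparable to $\rho^n\varphi(n)$ sitting inside the order-$n$ cylinder $I(\epsilon_1,\dots,\epsilon_n)$. The key structural observation is that $T^m$ maps the cylinder $I(\epsilon_1,\dots,\epsilon_m)$ affinely onto $[0,1]$ (scaling by $\rho^{-m}$), and $T^m$ restricted to $K\cap I(\epsilon_1,\dots,\epsilon_m)$ carries the conditional measure to $\mu$; so inside a fixed order-$m$ cylinder $I(\epsilon|_1^m)$, the trace of $A_n(B)$ (for $n>m$) looks like a rescaled copy of the order-$(n-m)$ structure. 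I would first treat the ``diagonal-ish'' case where the order-$m$ cylinder is the same as the first $m$ coordinates of the order-$n$ word, and then the generic case.

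First I would fix $m<n$ and write $A_m(B)\cap A_n(B)$ as a union over pairs $(\tau,\epsilon)$ with $\tau\in\Lambda^m$, $\epsilon\in\Lambda^n$, of $J(\tau)\cap J(\epsilon)\cap K\cap B$. This intersection is empty unless $I(\tau)\cap I(\epsilon)\neq\emptyset$, i.e.\ unless $\epsilon|_1^m=\tau$, by the strong separation condition. So actually $A_m(B)\cap A_n(B)=\bigsqcup_{\tau\in\Lambda^m}\big(J(\tau)\cap\bigsqcup_{\epsilon|_1^m=\tau}J(\epsilon)\big)\cap K\cap B$. Now $J(\tau)$ is an interval of length $\lesssim \rho^{m-1}\varphi(m)$ inside $I(\tau)$, and within $I(\tau)$ the points $x$ with $|T^nx-x|<\varphi(n)$ that additionally lie in $J(\tau)$ — I would bound the number of order-$n$ cylinders $I(\epsilon)$ with $\epsilon|_1^m=\tau$ meeting $J(\tau)$ by $\lesssim 1 + |J(\tau)|/\rho^n \lesssim 1 + \rho^{m-n}\varphi(m)$, each contributing $\mu$-measure $\lesssim(\rho^n\varphi(n))^\gamma$. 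This gives
\begin{equation*}
\mu\big(J(\tau)\cap A_n(B)\big)\lesssim \big(1+\rho^{m-n}\varphi(m)\big)(\rho^n\varphi(n))^\gamma.
\end{equation*}
Summing over the $\tau\in\Lambda^m$ with $J(\tau)\subset 2B$ (of which there are $\lesssim \mu(B)L^m$, using \eqref{cantor-alf-reg}), and using $\rho^\gamma L=1$, I would obtain
\begin{equation*}
\mu(A_m(B)\cap A_n(B))\lesssim \mu(B)L^m\big(1+\rho^{m-n}\varphi(m)\big)(\rho^n\varphi(n))^\gamma
\lesssim \mu(B)\big(\varphi^\gamma(n)+\varphi(m)^{\gamma}\varphi(m)^{1-\gamma}\rho^{(m-n)}\cdot\text{(}\cdots\text{)}\big).
\end{equation*}
The clean way to package this: since $\varphi$ is bounded by $(1-\rho)/4$ and $0<\gamma<1$, one checks $\rho^{m-n}\varphi(m)(\rho^n\varphi(n))^\gamma L^m \lesssim \varphi^\gamma(m)\varphi^\gamma(n)$ when $m<n$ (the extra factor $\rho^{(1-\gamma)(m-n)}\varphi(m)^{1-\gamma}$ is bounded because $\varphi(m)^{1-\gamma}\lesssim 1$ and one must be slightly careful — actually $\rho^{(1-\gamma)(m-n)}$ grows, so this needs the refinement below). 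Thus the target bound $\mu(A_m(B)\cap A_n(B))\lesssim \mu(B)^{-1}\mu(A_m(B))\mu(A_n(B))$ follows from \eqref{An-mea-lb} once the cross-term is controlled, and summing over $1\le m,n\le N$ (adding the trivial diagonal terms $\mu(A_n(B))\le\mu(B)$ and using \eqref{local-inf-mea} to absorb them into the square) yields \eqref{form-local}.

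The main obstacle is exactly the ``$1+$'' in the cylinder count: when $\rho^{m-n}\varphi(m)$ is large the naive estimate overcounts, and a straightforward bound produces a factor $\rho^{(1-\gamma)(m-n)}$ that blows up as $n-m\to\infty$. The fix is to not dump the whole interval $J(\tau)$ into one order-$n$ scale, but instead to observe that the subset of $J(\tau)$ lying in $A_n(B)$ is, after applying the affine bijection $T^m:I(\tau)\to[0,1]$, precisely a scaled copy of $A_{n-m}$-type set intersected with an interval of length $\lesssim\varphi(m)$; hence its conditional measure is $\lesssim \varphi(m)^\gamma\cdot\varphi^\gamma(n-m)$ by the $\gamma$-Ahlfors regularity \eqref{cantor-alf-reg} together with the length bound \eqref{length} applied at level $n-m$ — wait, more carefully, $\varphi(n)$ not $\varphi(n-m)$: the return condition $|T^nx-x|<\varphi(n)$ inside $I(\tau)$ becomes, after rescaling, a condition at scale $\rho^{-m}\varphi(n)$ on a set of diameter $\lesssim\varphi(m)$, giving conditional measure $\lesssim\min\{\varphi(m),\ \#\{\text{order-}(n-m)\text{ cyl}\}\cdot\rho^{m}\cdot\text{stuff}\}^\gamma$. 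Either way, the honest computation shows $\mu(J(\tau)\cap A_n(B))\lesssim \varphi^\gamma(m)\cdot\varphi^\gamma(n)\cdot L^{-m}$ — the $L^{-m}=\mu(I(\tau))$ is the conditional normalization and the two $\varphi^\gamma$ factors decouple — and summing over the $\lesssim \mu(B)L^m$ relevant $\tau$ gives $\mu(A_m(B)\cap A_n(B))\lesssim\mu(B)\varphi^\gamma(m)\varphi^\gamma(n)$, i.e.\ exactly \eqref{form-local} via \eqref{An-mea-lb}. So the real work is setting up this ``rescaling inside an order-$m$ cylinder'' lemma cleanly and checking the two scales genuinely separate; everything else is bookkeeping with $\rho^\gamma L=1$ and the geometric series.
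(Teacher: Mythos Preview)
Your overall strategy---decompose $A_m(B)\cap A_n(B)$ over order-$m$ cylinders $\tau\in\Lambda^m$, then count the order-$n$ subcylinders meeting $J(\tau)$---is exactly what the paper does. However, there is a real gap. Your first cylinder count $1+|J(\tau)|/\rho^n$ is a valid upper bound but far too crude: since $K$ is $\gamma$-Ahlfors regular with $\gamma<1$, the number of order-$n$ cylinders meeting an interval of length $\ell$ is $\lesssim 1+\ell^\gamma L^n$, not $\lesssim 1+\ell\,\rho^{-n}$, and this overcounting is precisely the source of the spurious factor $\rho^{(1-\gamma)(m-n)}$ you noticed. Your proposed ``rescaling'' fix then overshoots in the opposite direction: the asserted uniform estimate $\mu(J(\tau)\cap A_n(B))\lesssim \varphi^\gamma(m)\varphi^\gamma(n)L^{-m}$ is \emph{false} when $\rho^{n-m}\gtrsim\varphi(m)$, i.e.\ when $J(\tau)$ is shorter than a single order-$n$ cylinder. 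In that regime $J(\tau)$ meets at most two $J(\epsilon_1,\dots,\epsilon_n)$, and the honest bound is only $(\rho^n\varphi(n))^\gamma=L^{-n}\varphi^\gamma(n)$, which can greatly exceed $\varphi^\gamma(m)\varphi^\gamma(n)L^{-m}$ (equivalently, $L^{-n}\leq \varphi^\gamma(m)L^{-m}$ would force $\rho^{n-m}\lesssim\varphi(m)$, the opposite regime).

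The paper resolves this by an explicit two-case split. In Case~(ii), where $\rho^{n-m}\lesssim\varphi(m)$, the Ahlfors-regular count $\lesssim (\rho^m\varphi(m))^\gamma L^n$ gives $\mu(A_m(B)\cap A_n(B))\lesssim\mu(B)\varphi^\gamma(m)\varphi^\gamma(n)$, hence genuine quasi-independence via \eqref{An-mea-lb}. In Case~(i), where $\rho^{n-m}\gtrsim\varphi(m)$, one only obtains $\mu(A_m(B)\cap A_n(B))\lesssim\mu(B)L^{m-n}\varphi^\gamma(n)$; summing this over $m<n$ yields the convergent geometric series $\sum_{k\geq1}L^{-k}$, so the total Case~(i) contribution is $\lesssim\mu(B)\sum_n\varphi^\gamma(n)\lesssim\sum_n\mu(A_n(B))$, which is absorbed into the square by \eqref{local-inf-mea}. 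Your closing remark about ``the geometric series'' suggests you sensed this, but in the proposal it is relegated to bookkeeping after a pointwise bound that does not hold; in fact the geometric series is what \emph{replaces} the missing pointwise bound in Case~(i). (A minor side note: the rescaled condition under $T^m$ is $|T^{n-m}y-[\tau]-\rho^m y|<\varphi(n)$, not literally an $A_{n-m}$-type recurrence condition, so that description also needs adjusting.)
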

Indeed, \eqref{local-inf-mea} and Lemma \ref{local-quasi-indep} together with Lemma \ref{mea-limsup} imply \eqref{loc-positive-mea}. This completes the proof of the divergent part upon the `local quasi-independence on average' result, which we will prove now.

\medskip
\begin{proof}[Proof of Lemma \ref{local-quasi-indep}] Recall that $B$ is a fixed ball centred at a point in $K$ such that $\mu(2B)$ satisfies \eqref{cantor-alf-reg} and $n_0:=n_0(B)$ is chosen so that $\rho^{n_0}<r(B)$. For any $n>m>n_0$,
\begin{align*}
    \mu(A_m(B)\cap A_n(B))= & \mu\Big(\bigsqcup_{{\epsilon\in\Lambda^m}}J(\epsilon_1,\cdots,\epsilon_m)\cap K\cap B\cap A_n(B)\Big)\notag \\
                          = & \sum_{\epsilon\in\Lambda^m}\mu(J(\epsilon_1,\cdots,\epsilon_m)\cap B\cap A_n(B)) \notag \\
                       \lesssim & \;  \mathcal{N}(m, B)\cdot\max_{\epsilon\in\Lambda^m}\mu(J(\epsilon_1,\cdots,\epsilon_m)\cap A_n(B)),
\end{align*}
where
\begin{align*}
   \mathcal{N}(m, B)=  & \#\{\epsilon\in\Lambda^m:\; J(\epsilon_1,\cdots,\epsilon_m)\cap B\neq\emptyset\}\notag\\
                   \leq & \#\{\epsilon\in\Lambda^m:\; I(\epsilon_1,\cdots,\epsilon_m)\cap B\neq\emptyset\}\notag\\
                   \leq & \#\{\epsilon\in\Lambda^m:\; I(\epsilon_1,\cdots,\epsilon_m)\subset 2B\}
                   \lesssim \frac{\mu(B)}{L^{-m}}.
\end{align*}
Thus,
\begin{equation}\label{AmAn}
   \mu(A_m(B)\cap A_n(B))\lesssim \frac{\mu(B)}{L^{-m}}\cdot\max_{\epsilon\in\Lambda^m}\mu(J(\epsilon_1,\cdots,\epsilon_m)\cap A_n(B)).
\end{equation}
Next, we shall obtain an upper bound for $\max_{\epsilon\in\Lambda^m}\mu(J(\epsilon_1,\cdots,\epsilon_m)\cap A_n(B))$. Indeed, for any fixed finite word $\epsilon\in\Lambda^m$,
\begin{align}\label{JmCapAn}
     &\mu(J(\epsilon_1,\cdots,\epsilon_m)\cap A_n(B))\notag\\
   = & \mu\Big(J(\epsilon_1,\cdots,\epsilon_m)\cap \big(\bigsqcup_{\epsilon_{m+1}, \cdots, \epsilon_n\in\Lambda} J(\epsilon_1,\cdots,\epsilon_m,\epsilon_{m+1},\cdots,\epsilon_n)\cap K\cap B\big)\Big)\notag\\
   \leq & \sum_{\epsilon_{m+1}, \cdots, \epsilon_n\in\Lambda}\mu(J(\epsilon_1,\cdots,\epsilon_m)\cap J(\epsilon_1,\cdots,\epsilon_m,\epsilon_{m+1},\cdots,\epsilon_n)).
\end{align}
We consider two cases depending on the size of $\frac{2\rho^m \varphi(m)}{1-\rho^m}$ compared to $\rho^n$.

Case (i): $n>m>n_0$ such that $\rho^n\geq \frac{2\rho^m \varphi(m)}{1-\rho^m}$. Then for each $\epsilon\in\Lambda^n$,
$|I(\epsilon_1,\cdots,\epsilon_n)|=\rho^n\geq \frac{2\rho^m \varphi(m)}{1-\rho^m}\geq |J(\epsilon_1,\cdots,\epsilon_m)|$. As a result, each
$J(\epsilon_1,\cdots,\epsilon_m)$ can intersect at most two $J(\epsilon_1,\cdots,\epsilon_m,\epsilon_{m+1},\cdots,\epsilon_n)$.  By \eqref{length-J} and  \eqref{JmCapAn}, for any $\epsilon\in\Lambda^m$, $\mu(J(\epsilon_1,\cdots,\epsilon_m)\cap A_n(B))\lesssim (\rho^n\varphi(n))^{\gamma}$. So \eqref{AmAn} yields
\begin{equation}\label{case-1}
  \mu(A_m(B)\cap A_n(B))
  \lesssim \; \frac{\mu(B)}{L^{-m}}\cdot (\rho^n\varphi(n))^{\gamma}.
\end{equation}

\smallskip
Case (ii): $n>m>n_0$ such that $\rho^n< \frac{2\rho^m \varphi(m)}{1-\rho^m}$. It follows from \eqref{JmCapAn} that
\begin{align*}
   & \; \mu(J(\epsilon_1,\cdots,\epsilon_m)\cap  A_n(B)) \\
     \lesssim \;
    & \mathcal{N}(n, J(\epsilon_1,\cdots,\epsilon_m)) \max_{\epsilon_{m+1}, \cdots, \epsilon_n\in\Lambda}
   \mu(J(\epsilon_1,\cdots,\epsilon_m,\epsilon_{m+1},\cdots,\epsilon_n))
\end{align*}
where
\begin{align*}
   \mathcal{N}(n, J(\epsilon_1,\cdots,\epsilon_m))=
   & \#\{\epsilon\in\Lambda^n:\; J(\epsilon_1,\cdots,\epsilon_m,\epsilon_{m+1},\cdots,\epsilon_n)\cap
      J(\epsilon_1,\cdots,\epsilon_m)\neq\emptyset\}\\
   \leq & \#\{\epsilon\in\Lambda^n:\; I(\epsilon_1,\cdots,\epsilon_m,\epsilon_{m+1},\cdots,\epsilon_n)\cap
      J(\epsilon_1,\cdots,\epsilon_m)\neq\emptyset\} \\
   \leq & \#\{\epsilon\in\Lambda^n:\; I(\epsilon_1,\cdots,\epsilon_m,\epsilon_{m+1},\cdots,\epsilon_n)\subset
      3 J(\epsilon_1,\cdots,\epsilon_m)\} \\
   \lesssim & \; \frac{\varphi^{\gamma}(m)\cdot \rho^{m\gamma}}{L^{-n}}.
\end{align*}
Since $\mu(J(\epsilon_1,\cdots, \epsilon_n))\lesssim (\rho^n\varphi(n))^{\gamma}$ for all $\epsilon\in\Lambda^n$, it follows that
\begin{equation}\label{JmAn}
    \max_{\epsilon\in\Lambda^m}\mu(J(\epsilon_1,\cdots,\epsilon_m)\cap A_n(B))
    \lesssim \frac{\varphi^{\gamma}(m)\cdot \rho^{m\gamma}}{L^{-n}}\cdot (\rho^n\varphi(n))^{\gamma}.
\end{equation}
Therefore,
\begin{align}\label{case-2}
  \mu(A_m(B)\cap A_n(B))
  \lesssim &\; \frac{\mu(B)}{L^{-m}}\cdot \frac{\varphi^{\gamma}(m)\cdot \rho^{m\gamma}}{L^{-n}}\cdot (\rho^n\varphi(n))^{\gamma} && (\text{by \eqref{AmAn}}) \notag\\
  \lesssim &\; \mu(B)\cdot\varphi^{\gamma}(m)\cdot\varphi^{\gamma}(n) && (\text{since}\; \rho^{\gamma}=L^{-1})\notag\\
  \lesssim &\; \frac{1}{\mu(B)}\mu(A_m(B))\mu(A_n(B)). && \text{(by \eqref{An-mea-lb})}
\end{align}

Now we prove \eqref{form-local}. For $N$ sufficiently large,
\begin{eqnarray*}
     &&\sum_{m, n=1}^N\mu(A_m(B)\cap A_n(B)) \\
     &=\;& \sum_{n=1}^N \mu(A_n(B)) + 2 \sum_{1\leq m<n\leq N}\mu(A_m(B)\cap A_n(B))\\
     &=\;& \sum_{n=1}^N \mu(A_n(B)) +2 \left[\sum_{1\leq m<n\leq n_0\leq N}\mu(A_m(B)\cap A_n(B)) \right.\\
         && \left.+ \sum_{1\leq m< n_0< n\leq N}\mu(A_m(B)\cap A_n(B)) + \sum_{n_0\leq m<n\leq N}\mu(A_m(B)\cap A_n(B))\right]\\
     &=:&   S_1+ 2(S_2+S_3+S_4).
\end{eqnarray*}
By \eqref{local-inf-mea}, one can check that $S_1$, $S_2$ and $S_3$ are all less than $\frac{1}{\mu(B)}\big(\sum_{n=1}^N\mu(A_n(B))\big)^2$ for $N$ sufficiently large. Further,
\begin{align*}
  S_4=\sum_{\substack{n_0\leq m<n\leq N\\ \text{Case (i)}}}\mu(A_m(B)\cap A_n(B))+\sum_{\substack{n_0\leq m<n\leq N\\ \text{Case (ii)}}}\mu(A_m(B)\cap A_n(B))
\end{align*}
Since in Case (i), $\rho^n\geq \frac{2\rho^m \varphi(m)}{1-\rho^m}$, we have
\begin{align*}
    \sum_{\substack{n_0\leq m<n\leq N\\ \text{Case (i)}}}\mu(A_m(B)\cap A_n(B))
    \leq & \sum_{m=n_0}^N\sum_{n=m+1}^{n_m\wedge N} \mu(A_m(B)\cap A_n(B)) && \\
    \lesssim & \sum_{m=n_0}^N\sum_{n=m+1}^{n_m\wedge N}\frac{\mu(B)}{L^{-m}}(\rho^n\varphi(n))^{\gamma} && \text{by \eqref{case-1}} \\
      \leq & \sum_{m=n_0}^N\sum_{n=m+1}^{n_m\wedge N} \frac{\mu(B)\varphi^{\gamma}(n)}{L^{n-m}} && \text{since}\; \rho^{\gamma}=L^{-1}\\
      \lesssim & \sum_{n=n_0+1}^N \mu(A_n(B))\sum_{k=1}^{N-n_0} \frac{1}{L^k} && \text{by \eqref{An-mea-lb}}\\
      \lesssim & \; \frac{1}{\mu(B)}\Big(\sum_{n=1}^N\mu(A_n(B))\Big)^2 && \text{by \eqref{local-inf-mea}}
\end{align*}
for $N$ sufficiently large, where $n_m=m+\big[\log_3 \frac{1}{2\varphi(m)}\big]+1$. Finally, it follows from \eqref{case-2} that
\begin{equation*}
  \sum_{\substack{n_0\leq m<n\leq N\\ \text{Case (ii)}}}\mu(A_m(B)\cap A_n(B))\lesssim \; \frac{1}{\mu(B)}\Big(\sum_{n=1}^N\mu(A_n(B))\Big)^2
\end{equation*}
for $N$ sufficiently large. Therefore, we arrive at the required. \qedhere
\end{proof}

%%%%%%%%%%%%%%%%%%%%%%%%%%%%%%%%%%%%%%%%%%%%%%%%%%%%%%%%%%%%%%%%%%%
\subsection{Proof of Theorem \ref{dict-hausdorff}}
%%%%%%%%%%%%%%%%%%%%%%%%%%%%%%%%%%%%%%%%%%%%%%%%%%%%%%%%%%%%%%%%%%%
\
\smallskip
\par
\textbf{The convergent part.} Recall that $f$ is a doubling dimension function and each $J(\epsilon_1,\cdots,\epsilon_n)$ is an interval whose length satisfies \eqref{length}. By \eqref{limsup-deco} and the definition of Hausdorff $f$-measure,
\begin{equation*}
  \begin{split}
    \mathcal{H}^f(R(\varphi))\leq &\liminf_{k\rightarrow\infty}\sum_{n=k}^{\infty}\sum_{\epsilon\in\Lambda^n} f(\rho^{n-1}\varphi(n))\\
    \lesssim &\; \liminf_{k\rightarrow\infty}\sum_{n=k}^{\infty} f(\rho^n\varphi(n))\cdot (\rho^{-\gamma})^n=0.
  \end{split}
\end{equation*}
Thus, $\mathcal{H}^f(R(\varphi))=0$ as required.

\smallskip
\textbf{The divergent part.} By the strong separation condition, the minimal gap between distinct cylinders of order $1$ is
$\min_{1\leq j\leq L-1}(a_{j+1}-a_j-\rho)>0$. Without loss of generality, we assume that $\varphi(n)\rightarrow 0$ as $n\rightarrow\infty$ (otherwise, $R(\varphi)=K$ and the assertion is obvious). It follows that $\varphi(n)\leq\frac{1}{4}(1-\rho)\min_{1\leq j\leq L-1}(a_{j+1}-a_j-\rho)$ for all sufficiently large $n$ and for any $\epsilon\in\Lambda^n$, the ball $B(\frac{[\epsilon]}{1-\rho^n}, \frac{\rho^n \varphi(n)}{1-\rho^n})$ intersects only one cylinder $I(\epsilon_1, \cdots, \epsilon_n)$ of order $n$. Since $K=\mathop{\bigcap}\limits_{k=1}^{\infty}\mathop{\bigcup}\limits_{n=k}^{\infty}\mathop{\bigsqcup}\limits_{\epsilon\in\Lambda^n}I(\epsilon_1,\cdots,\epsilon_n)$,
we have
\begin{equation}\label{mu-1}
  J(\epsilon_1, \cdots, \epsilon_n)\cap K=B\Big(\frac{[\epsilon]}{1-\rho^n}, \frac{\rho^n \varphi(n)}{1-\rho^n}\Big)\cap K.
\end{equation}
Let $$\widetilde{\varphi}(n)=f^{1/\gamma}\big(\frac{\rho^n \varphi(n)}{1-\rho^n}\big)\times \frac{1-\rho^n}{\rho^n}$$
and
$$\widetilde{J}(\epsilon_1, \cdots, \epsilon_n)=I(\epsilon_1, \cdots, \epsilon_n)\cap B\Big(\frac{[\epsilon]}{1-\rho^n}, \frac{\rho^n \widetilde{\varphi}(n)}{1-\rho^n}\Big).$$
Obviously,
\begin{equation}\label{mu-2}
  \widetilde{J}(\epsilon_1, \cdots, \epsilon_n)\cap K \subset
   B\Big(\frac{[\epsilon]}{1-\rho^n}, \frac{\rho^n \widetilde{\varphi}(n)}{1-\rho^n}\Big)\cap K.
\end{equation}
Moreover, by the monotonicity of $f$ and the assumption,
\begin{equation*}
  \sum_{n=1}^{\infty}\widetilde{\varphi}^{\gamma}(n)
  \gtrsim\sum_{n=1}^{\infty}f\big(\rho^n\varphi(n)\big)\cdot \rho^{-\gamma n}=\infty.
\end{equation*}
It follows from the divergent part of Theorem \ref{dict-lebesgue} that
\begin{equation*}
  \mathcal{H}^{\gamma}\big(\mathop{\bigcap}\limits_{k=1}^{\infty}\mathop{\bigcup}\limits_{n=k}^{\infty}\mathop{\bigsqcup}
  \limits_{\epsilon\in\Lambda^n}\widetilde{J}(\epsilon_1, \cdots, \epsilon_n)\cap K\big)=\mathcal{H}^{\gamma}(R(\widetilde{\varphi})
  \cap K)=\mathcal{H}^{\gamma}(K).
\end{equation*}
The above equation and \eqref{mu-2} yield
\begin{align*}
  \mathcal{H}^{\gamma}(K)
=&\mathcal{H}^{\gamma}\left(\mathop{\bigcap}\limits_{k=1}^{\infty}\mathop{\bigcup}\limits_{n=k}^{\infty}\mathop{\bigcup}
  \limits_{\epsilon\in\Lambda^n}B\Big(\frac{[\epsilon]}{1-\rho^n}, \frac{\rho^n \widetilde{\varphi}(n)}{1-\rho^n}\Big)\cap K\right)\\
=&\mathcal{H}^{\gamma}\left(\mathop{\bigcap}\limits_{k=1}^{\infty}\mathop{\bigcup}\limits_{n=k}^{\infty}\mathop{\bigcup}
  \limits_{\epsilon\in\Lambda^n}B\Big(\frac{[\epsilon]}{1-\rho^n}, f^{1/\gamma}\big(\frac{\rho^n \varphi(n)}{1-\rho^n}\big)\Big)\cap K\right).
\end{align*}
Applying Theorem \ref{MTP} to the collection of balls
$\big\{B\big(\frac{[\epsilon]}{1-\rho^n}, \frac{\rho^n \varphi(n)}{1-\rho^n}\big)\big\}_{n\geq 1, \epsilon\in\Lambda^n}$, we have
\begin{equation*}
   \mathcal{H}^f\Big(\mathop{\bigcap}\limits_{k=1}^{\infty}\mathop{\bigcup}\limits_{n=k}^{\infty}\mathop{\bigsqcup}
  \limits_{\epsilon\in\Lambda^n}B\big(\frac{[\epsilon]}{1-\rho^n}, \frac{\rho^n \varphi(n)}{1-\rho^n}\big)\cap K\Big)=\mathcal{H}^f(K),
\end{equation*}
which together with \eqref{mu-1} implies $\mathcal{H}^f(R(\varphi))=\mathcal{H}^f(K)$. The proof is complete.

\section*{Acknowledgements} This work was supported by NSFC (Grant No. 11771153), the Fundamental Research Funds for the Central Universities (No. 2017MS110) and the Characteristic innovation project of colleges and universities in Guangdong (No. 2016KTSCX007).

\bibliographystyle{amsplain}

\end{document}